\author{Alex Burgin}
\address{Duke University, Durham, NC 27708, United States}
\email{alexander.burgin@duke.edu}
\author{Samuel Goldberg}
\address{University of Virginia, Charlottesville, VA 22904, United States}
\email{seg8st@virginia.edu}
\author{Tam\'as Keleti}
\address{Institute of Mathematics, E\"otv\"os Lor\'and University, 1117 Budapest, P\'azm\'any P. stny 1/c}
\email{tamas.keleti@gmail.com}
\thanks{The third author was supported by the Hungarian National Research, Development ad Innovation Office - NKFIH, 124749 and 129335.}
\author{Connor MacMahon}
\address{Michigan State University, East Lansing, MI 48824, United States}
\email{macmaho1@msu.edu}
\author{Xianzhi Wang}
\address{Middlebury College, Middlebury, VT 05753, United States}
\email{xianzhiw@middlebury.edu}
\title{Large sets avoiding infinite arithmetic / geometric progressions}
\begin{document}
\begin{abstract}
    We study some variants of the Erd\H{o}s similarity problem.
    We pose the question if every measurable subset of the real line with positive measure contains a similar copy of an infinite geometric progression.
    We construct a compact subset $E$ of the real line such that $0$ is a Lebesgue density point of $E$, but $E$ does not contain any (non-constant) infinite geometric progression.
    We give a sufficient density type condition that guarantees that a set contains an infinite geometric progression.
    
    By slightly improving a recent result of Bradford, Kohut and Mooroogen \cite{BKM}, we construct a closed set $F\subset[0,\infty)$ such that the measure of $F\cap[t,t+1]$ tends to $1$ at infinity but $F$ does not contain any infinite arithmetic progression.
    We also slightly improve a more general recent result by Kolountzakis and Papageorgiou \cite{KP} for more general sequences.
    
   We give a sufficient condition that guarantees that a given Cantor type set contains at least one infinite geometric progression with any quotient between $0$ and $1$. 
   This can be applied to most symmetric Cantor sets of positive measure.
\end{abstract}

\maketitle

\section{Introduction}


In 1974 P\'al Erd\H{o}s \cite{erdos-orig-citation} 
posed the question if for any infinite set $A\subset\R$
there exists a set $E\subset \mathbb{R}$ with positive Lebesgue measure such that $E$ contains no similar copies of $A$ (here by a \emph{similar copy} of a set $S\subset \R$ we mean a set of the form $aS+b=\{ax+b : x\in S\}$, where $a,b\in\R$ and $a\neq 0$). 
The statement that the answer is positive is called the \emph{Erd\H{o}s similarity conjecture}, although it is not clear if Erd\H{o}s himself expected an affirmative answer. 
There are a lot of partial results (see e.g. \cite{falconer-slowgrow,Sv, kolountzakis_1997, bourgainSumset, latestKolountzakis}), but none so far that conclusively answer the question.



It is easy to see that it is enough to check the conjecture for $A=\{a_1,a_2,\ldots\}$, where $(a_n)$ is a sequence converging to zero.
In 1984 Falconer \cite{falconer-slowgrow} showed the conjecture for all slowly converging sequences (in the sense that $a_{n+1}/a_n\rightarrow 1$). 
Surprisingly, there is no single exponentially quickly convergent sequence $(a_n)$ for which it is known if the Erd\H{o}s similarity conjecture holds. 
In particular it is not known if there exists a $q\in(0,1)$ such that every measurable set $E\subset\R$ with positive Lebesgue measure contains a similar copy of the sequence $(q^n)$.
Clearly, the Erd\H{o}s similarity conjecture would imply that such $q$ cannot exist. 
Here we pose the following possibly easier question:

\begin{question}\label{q:gptranslate}
Does every measurable set $E\subset\R$ of positive Lebesgue measure contain a sequence of the form $(a q^n + b)_{n=1}^\infty$ for some $a,b,q\in\R$, $a\neq 0$, $q\in(0,1)$? 
\end{question}

We do not know the answer to Question~\ref{q:gptranslate}.
A natural attempt to prove an affirmative answer could be taking a Lebesgue density point $b$ of $E$ and showing that for such a $b$ we can find $a\in\R\setminus\{0\}$ and $q\in(0,1)$ such that $(a q^n + b)_{n=1}^\infty\subset E$. 
One of the main results of this paper is the following result, which shows that this approach does not work.
\begin{theorem}\label{t:densitywithoutGP}
There exists a compact set $E\subset\R$ such that
$0$ is a Lebesgue density point of $E$ but
$E$ does not contain any (non-constant) infinite
geometric progression $(a q^n)_{n=1}^\infty$.
\end{theorem}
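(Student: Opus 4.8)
The plan is to take logarithms, turning geometric progressions into arithmetic ones, and then to realise the complement of the desired set as a union of randomly ``combed'' blocks receding to infinity.

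\emph{Step 1: reduction to an additive problem.} A non-constant infinite geometric progression $(aq^n)_{n\ge1}$ with infinitely many distinct terms lying in a bounded set must have $0<|q|<1$ (for $|q|>1$ it is unbounded, while $q\in\{0,\pm1\}$ gives at most two distinct values); passing to its even-indexed subsequence (of ratio $q^2\in(0,1)$) when $q<0$, and negating when the terms are negative, it suffices to produce a compact $E_0\subset[0,1]$ with $0\in E_0$, with $|E_0\cap[0,\delta]|/\delta\to1$ as $\delta\to0^+$, and containing no progression $(aq^n)_{n\ge1}$ with $a>0$ and $q\in(0,1)$: then $E:=E_0\cup(-E_0)$ is compact, $0$ is a two-sided density point since $|E\cap(-\delta,\delta)|/(2\delta)=|E_0\cap[0,\delta)|/\delta\to1$, and $E$ contains no non-constant infinite geometric progression (to exclude also the two-periodic case $q=-1$, use $E_0\cup(-E_1)$ for two such sets on disjoint families of scales). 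Applying the homeomorphism $u\mapsto e^{-u}$ of $[0,\infty)$ onto $(0,1]$, a closed set $F\subset[0,\infty)$ gives the compact set $E_0:=\{0\}\cup\{e^{-u}:u\in F\}$; a positive number $aq^n$ lies in $E_0$ exactly when $(-\log a)+n\log(1/q)\in F$, so $E_0$ contains no progression $(aq^n)$ with $a>0$, $q\in(0,1)$ precisely when $F$ contains no infinite arithmetic progression; and a change of variables together with the elementary comparison $e^{-1}|F^{c}\cap[t,t+1]|\le\int_{F^{c}\cap[t,\infty)}e^{-(s-t)}\,ds\le\frac{e}{e-1}\sup_{s\ge t}|F^{c}\cap[s,s+1]|$, with $F^{c}=[0,\infty)\setminus F$, shows that $|E_0\cap[0,\delta]|/\delta\to1$ is equivalent to $|F\cap[t,t+1]|\to1$ as $t\to\infty$. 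It thus suffices to build a closed $F\subset[0,\infty)$ with $|F\cap[t,t+1]|\to1$ at infinity containing no infinite arithmetic progression.

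\emph{Step 2: construction of $F$.} Let $F^{c}$ be a disjoint union of blocks $C_{m,i}\subset(a_{m,i},b_{m,i})$, $m,i\ge1$, with the intervals $(a_{m,i},b_{m,i})$ pairwise disjoint and receding to $+\infty$; block $C_{m,i}$ is meant to trap every arithmetic progression of common difference $d\in(1/m,m]$, which as $m$ varies covers all $d>0$. In a block of length $L$ fix a spacing $\sigma$ equal to a fixed multiple of $m$ (so $\sigma$ is safely larger than every admissible $d$), cut the block into about $L/\sigma$ consecutive cells of length $\sigma$, and in each cell place one open ``tooth'' of width $w=w_{m,i}$ centred at an independent uniformly random point of the cell, choosing $w_{m,i}\to0$ and $L=L_{m,i}$ so large that $w_{m,i}L_{m,i}/\sigma_{m,i}^{2}\to\infty$ as $i\to\infty$ for each fixed $m$. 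The local density of $F^{c}$ is then at most $w_{m,i}\to0$, so $|F^{c}\cap[t,t+1]|\to0$ holds deterministically. For a fixed arithmetic progression of difference $d\in(1/m,m]$ whose first term lies below $a_{m,i}$, so that it passes through the whole block: since $\sigma>d$, each of the cells contains one of its terms, which falls into that cell's tooth with probability at least a constant times $w/\sigma$, independently across cells; hence the progression escapes the block with probability at most $(1-cw/\sigma)^{\lfloor L/\sigma\rfloor}\le e^{-cwL/\sigma^{2}}$. For a fixed progression these probabilities are summable over its admissible blocks, so by Borel--Cantelli it is almost surely trapped.

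\emph{Step 3, and the main obstacle.} There are uncountably many infinite arithmetic progressions, and upgrading Step~2 to ``every progression'' is where the real work is. Fix $m$ and $i$: the progressions passing through $(a_{m,i},b_{m,i})$ in a full family of about $L/d$ terms are parametrised, up to restriction to the block, by a point (first in-block term, common difference) in a compact box; cover this box by a net of cardinality polynomial in $m$ and $i$ with mesh small enough that a net-displacement moves every in-block term by a small fraction of $w$. The ``trapped with a definite margin'' version of the Step~2 bound, applied to the net points and union-bounded, shows that the probability that \emph{some} such progression escapes $C_{m,i}$ is at most $\mathrm{poly}(m,i)\cdot e^{-cwL/\sigma^{2}}$, summable in $i$; by Borel--Cantelli, almost surely for every $m$ and every large $i$ no progression escapes $C_{m,i}$. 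Since any infinite arithmetic progression of difference $d$ passes through $(a_{m,i},b_{m,i})$ for all large $m$ and all large $i$, it is almost surely trapped, and any realisation for which this happens (the density bound holding automatically) yields the required $F$, hence via Step~1 the set $E$. The crux is precisely this step: a \emph{deterministic} single comb always lets through progressions whose common difference resonates with the comb's spacing, so no finitely described obstruction works on its own; the randomisation — or, deterministically, a sufficiently careful Diophantine choice of the spacings $\sigma_{m,i}$ — breaks the resonance, and the quantitative balance, that the per-block exponential gain $e^{-cwL/\sigma^{2}}$ must outrun the polynomial net cardinality uniformly in the parameters, is exactly where the construction must be pushed and where the slight strengthening of the recent construction of Bradford, Kohut and Mooroogen is obtained.
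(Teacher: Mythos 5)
Your Step 1 is essentially the paper's own reduction: they too pass to a closed $F\subset[0,\infty)$ with $\lambda(F\cap[t,t+1])\to1$ containing no infinite arithmetic progression (Theorem~\ref{t:AP}), set $E=\exp(-F)\cup-\exp(-F)\cup\{0\}$, and transfer the density statement through $u\mapsto e^{-u}$ via an equivalence (Lemma~\ref{lem:windowDensity}) proved by exactly your sandwich inequality. Where you genuinely diverge is in the construction of $F$. The paper's construction is deterministic and splits the common differences in two: the single periodic set $\bigcup_n[n,n+1-\varepsilon]$ kills every irrational difference at once by equidistribution of the fractional parts $\{b+n\Delta\}$ (Lemma~\ref{lem:equidistribute}), while the remaining differences, being countable after restricting to the rationals, are trapped by a sparse sequence of far-away blocking intervals (Lemma~\ref{Lem:countableElim}); the upgrade from ``$\ge 1-\varepsilon$ for all $t$'' to ``$\to1$ at infinity'' is then done by a separate compactness argument (Lemma~\ref{lem:Gen_Compact_Arg}), which is reusable and also yields their strengthening of the Kolountzakis--Papageorgiou theorem. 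You instead randomize: the uniformly placed teeth break exactly the resonance you correctly identify as the obstruction to a single deterministic comb, an $\varepsilon$-net plus union bound upgrades the per-progression Borel--Cantelli estimate to all progressions simultaneously, and the density condition is built directly into the shrinking tooth widths, so no separate compactness step is needed. Your route is self-contained and avoids the rational/irrational dichotomy, at the price of quantitative bookkeeping (a net of size $O(m^3L/w^2)$ against the gain $e^{-cwL/\sigma^2}$, which does close for, say, $\sigma=2m$, $w_{m,i}=i^{-1}$, $L_{m,i}=i^4$); the paper's route is shorter and more modular. Two details to pin down if you write yours out: the tooth widths must tend to $0$ along the spatial order of the blocks, not merely in $i$ for each fixed $m$, so that $\lambda(F^c\cap[t,t+1])\to0$; and summability requires $w_{m,i}L_{m,i}/\sigma_{m,i}^2$ to outgrow the logarithm of the net cardinality, not merely to tend to infinity.
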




After seeing that a density point at zero does not guarantee that the set contains an infinite 
geometric progression, it is natural to seek for other density type conditions that do guarantee geometric progressions. 
In Proposition~\ref{prop:SuffIntCondition} we
give such a sufficient condition.
Then we show that there exist measurable sets of
positive Lebesgue measure for which this density type condition does not hold for any point.
Thus this approach cannot give an answer to Question~\ref{q:gptranslate} either.

In \cite{kolountzakis_1997} Kolountzakis proved that for any infinite set $A\subset\R$ there exists a measurable set $E\subset[0,1]$ of measure arbitrarily close to $1$ such that almost every similar copy of $A$ is not contained in $E$.
By a slight modification of the original proof of Kolountzakis \cite[Theorem 1]{kolountzakis_1997} we show that 
there exists a measurable set $E \subset [0,1]$ of measure arbitrarily close to $1$ such that almost every similar copy of almost every infinite geometric progression is not contained in $E$.

Note that $E$ contains an infinite 
geometric progression if and only if 
$\log(E\cap(0,\infty))$ or $\log((-E)\cap(0,\infty))$
contains an infinite arithmetic progression. 
Therefore the problem of finding a sufficient condition that guarantees that a set contains an infinite 
geometric progression is closely related to finding a sufficient condition that guarantees that a set contains an infinite arithmetic progression.
This latter problem has been studied by L.~Bradford, H.~Kohut and Y.~Mooroogen in a very recent paper \cite{BKM}.
They proved the following result.
Here and in the sequel $\lambda$ denotes Lebesgue measure.

\begin{theorem}[Bradford et al. \cite{BKM}]
\label{thm:Bradford}
For any $\varepsilon >0$
there exists $S \subset \mathbb{R}$ satisfying 
\begin{align*}
    \lambda(S\cap [t,t+1]) \geq 1-\varepsilon
\end{align*} for each real $t$ such that $S$ does not contain any infinite arithmetic progression.
\end{theorem}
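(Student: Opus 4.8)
The plan is to construct the required set as the complement of a union of short intervals whose positions and density are organized along a geometric sequence of scales, and then to verify the construction scale-by-scale by a probabilistic argument.

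\textbf{Reductions.} By symmetry it is enough to produce a closed set $S_{+}\subseteq[0,\infty)$ with $\lambda\big(S_{+}\cap[t,t+1]\big)\ge 1-\varepsilon$ for every $t\ge 0$ and with $S_{+}$ containing no infinite arithmetic progression of positive common difference; one then takes $S=S_{+}\cup(-S_{+})$, noting that any infinite arithmetic progression inside $S$ has infinitely many terms on one side of $0$ which, after reflecting if necessary, form an infinite progression with positive difference contained in $S_{+}$. Next, if $(a+nd)_{n\ge 1}\subseteq S_{+}$ with $0<d<1$, then keeping every $\lceil 1/d\rceil$-th term produces an infinite progression inside $S_{+}$ with common difference in $[1,2)$; hence it suffices to arrange that $S_{+}$ contains no infinite arithmetic progression with common difference $d\ge 1$.

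\textbf{Multiscale decomposition.} Fix positive reals $\theta_{k}\downarrow 0$ with $\theta_{k}\le\varepsilon/2$ for all $k$, and partition $[0,\infty)=\bigcup_{k\ge 1}B_{k}$ into consecutive intervals $B_{k}=[R_{k},R_{k+1})$ with integer endpoints, whose lengths $L_{k}=R_{k+1}-R_{k}$ will be taken very large. The block $B_{k}$ is made responsible for all progressions whose common difference lies in the dyadic band $J_{k}:=[2^{k-1},2^{k})$, and the bands $J_{k}$ exhaust $[1,\infty)$. If $L_{k}\ge (N_{k}+1)2^{k}$ then every infinite progression with difference in $J_{k}$ has at least $N_{k}$ terms inside $B_{k}$, where the threshold $N_{k}$ is chosen below. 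The removed set will be $U=\bigcup_{k}U_{k}$ with each $U_{k}\subseteq B_{k}$ open and of density at most $\theta_{k}$ on every unit interval; since a unit interval meets at most two of the blocks, $\lambda(U\cap[t,t+1])\le \theta_{k}+\theta_{k+1}\le\varepsilon$, and $S_{+}:=[0,\infty)\setminus U$ is closed.

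\textbf{The local problem.} It remains, for each $k$, to find an open $U_{k}\subseteq B_{k}$ of density $\le\theta_{k}$ on every unit interval which meets every arithmetic progression of common difference $d\in[2^{k-1},2^{k})$ having at least $N_{k}$ terms in $B_{k}$. I would build $U_{k}$ at random: tile $B_{k}$ by unit cells and, in each cell, place one open interval of length comparable to $\theta_{k}$ at an independent uniformly random position; in fact use two such grids of cells, offset by $1/2$, so that every point of $B_{k}$ lies comfortably in the interior of a cell of at least one grid. The density bound for $U_{k}$ then holds deterministically. To show the catching property holds with positive probability, first discretize: such a progression is determined, up to a perturbation much smaller than $\theta_{k}$, by its first term in $B_{k}$ (in a bounded interval) and its difference (in $J_{k}$), so these progressions are described by a net of cardinality polynomial in $N_{k}$ and $2^{k}$. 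For a fixed net-progression $t_{0}<t_{1}<\dots<t_{N_{k}-1}$, consecutive terms differ by $d\ge 1$, hence the $t_{j}$ lie in distinct cells and the events ``$t_{j}$ is covered by the random interval of one of the two grids'' are independent, each of probability at least a fixed constant times $\theta_{k}$; therefore the probability that this net-progression avoids $U_{k}$ is at most $(1-c\theta_{k})^{N_{k}}$. A union bound over the net then gives positive probability of success once $\theta_{k}N_{k}$ exceeds a constant times $\log N_{k}+k$, which is secured by taking $N_{k}$ of order $(k+\log(1/\varepsilon))/\varepsilon$.

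\textbf{Assembling, and the main obstacle.} For each $k$ a deterministic $U_{k}$ with the stated properties then exists; taking $U=\bigcup_{k}U_{k}$, $S_{+}=[0,\infty)\setminus U$, and $S=S_{+}\cup(-S_{+})$ finishes the proof. The only genuinely delicate point is the local estimate above: one must keep the obstacle density pinned at roughly $\varepsilon$ while still meeting a \emph{continuum} of progressions, among them the arithmetically rigid ones — for instance those with $d$ a small integer, all of whose terms occupy the same position within their unit cell. The two devices that make the estimate go through are the two offset grids, which prevent a progression from slipping through near cell boundaries, and the independence afforded by $d\ge 1$, which lets the per-term catching probabilities multiply over the $N_{k}$ terms; getting these two ingredients exactly right (together with the attendant edge effects) is where I expect the real work to lie, everything else being bookkeeping about the growth of $L_{k}$ and $N_{k}$.
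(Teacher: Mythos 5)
Your overall strategy is sound and genuinely different from the paper's, but there is one step that fails as written. You claim that if $L_k\ge (N_k+1)2^k$ then \emph{every} infinite arithmetic progression with common difference $d\in J_k=[2^{k-1},2^k)$ has at least $N_k$ terms inside $B_k=[R_k,R_{k+1})$. This is false: an infinite progression with difference in $J_k$ whose first term exceeds $R_{k+1}$ has no terms in $B_k$ at all (e.g.\ $d=1$ and first term $R_2+5$ never meets $B_1$), and no other block is charged with catching it, since $U_m$ for $m\ne k$ is only guaranteed to meet progressions with difference in $J_m$. So the assembled set $S_+$ may well contain such a progression. The fix is routine but must be made explicit: each band $J_k$ has to be handled by infinitely many blocks (e.g.\ let block $B_m$ be responsible for all bands $J_1,\dots,J_m$ simultaneously, with $L_m\ge(N_m+1)2^m$; any infinite progression with difference in $J_k$ eventually has its first term below $R_m$ for some $m\ge k$ and then contributes $\ge N_m$ terms to $B_m$, and the union bound in the local problem only grows by a factor of $m$). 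With that repair, the rest of your argument (the reduction to $d\ge 1$, the two offset grids, the independence coming from $d\ge 1$ forcing distinct cells, and the net of size polynomial in $N_k$, $2^k$, $1/\theta_k$) goes through.

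For comparison, the paper avoids the multiscale/probabilistic machinery entirely by splitting the differences into irrational and rational ones. Irrational differences are killed by the single periodic set $\bigcup_n[n,n+1-\varepsilon]$, using that $(\{b+n\Delta\})_n$ is dense in $[0,1)$ for irrational $\Delta$; the countably many rational differences are killed by placing, for each pair (difference, starting window), one short interval very far out so that distinct obstacles never crowd the same unit interval. Intersecting the two sets gives the theorem in a few lines. Your construction is closer in spirit to the original Bradford--Kohut--Mooroogen argument and is heavier, but note that since you already let $\theta_k\downarrow 0$, the repaired version would directly yield the stronger conclusion $\lim_{t\to\infty}\lambda(S_+\cap[t,t+1])=1$ (Theorem~\ref{t:AP}), which the paper obtains only via the separate compactness argument of Lemma~\ref{lem:Gen_Compact_Arg}.
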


We slightly improve the above result by proving the following.

\begin{theorem}\label{t:AP}
There exists a closed set $F\subset[0,\infty)$ satisfying 
\begin{equation}\label{e:limit1}
    \lim_{t\to\infty}\lambda(F\cap [t,t+1])=1
\end{equation}
such that $F$ does not contain any infinite arithmetic progression. 
\end{theorem}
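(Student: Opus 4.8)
The plan is to build $F$ as an intersection of a nested sequence of closed sets, removing at each stage a sparse family of small open intervals chosen to kill all infinite arithmetic progressions while contributing only a vanishing amount of measure near infinity. I would start from the observation that an infinite arithmetic progression $(b + nd)_{n=0}^\infty$ with $d>0$ is eventually determined by its common difference $d$ and its ``phase'' $b \bmod d$; the key structural fact is that for large $n$ the points $b+nd$ are spaced exactly $d$ apart, so to destroy every such progression it suffices to arrange that for every $d>0$ the set $F$ fails to contain infinitely many points of \emph{some} (equivalently every) coset of $d\Z$. Concretely, I would fix a sequence $t_k \to \infty$ of scales, and on the block $[t_k, t_{k+1}]$ remove a periodic-in-the-limit pattern of gaps tuned to a countable dense set of ``forbidden'' common differences, so that any progression with difference close to one of those values must eventually land in a removed gap.

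The key steps, in order: (1) reduce to progressions with difference $d$ in a fixed countable dense subset $D=\{d_1, d_2,\dots\}$ of $(0,\infty)$ and, by a further limiting argument, to showing that for each $j$ the set $F$ meets only finitely many terms of each progression of difference $d_j$ — here one uses that if $F$ contained an infinite AP with difference $d$, then for any $d_j$ sufficiently close to $d$ it would contain arbitrarily long finite APs with difference $d_j$, which we forbid from some point on; (2) on the $k$-th block $[t_k,t_{k+1}]$, for each of the first $k$ differences $d_1,\dots,d_k$, remove a union of intervals of length $\delta_k$ placed so that every arithmetic progression of difference $d_i$ (with $i\le k$) contained in that block has a bounded number of terms — this is a pigeonhole/covering argument, essentially: partition $[t_k,t_{k+1}]$ into residue strips mod $d_i$ and delete a fixed small sub-strip from each period; (3) choose $\delta_k \to 0$ fast enough (relative to $t_{k+1}-t_k$ and to $k$) that the total measure deleted from any unit window $[t,t+1]$ with $t$ in the $k$-th block is $O(k\,\delta_k) \to 0$, which gives \eqref{e:limit1}; (4) check $F=[0,\infty)\setminus\bigcup(\text{deleted open intervals})$ is closed and contained in $[0,\infty)$, and assemble (1)--(3) into the no-infinite-AP conclusion. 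One can also simply invoke Theorem~\ref{thm:Bradford} as a black box on each block and then upgrade the uniform bound $1-\varepsilon$ to a limit by letting $\varepsilon=\varepsilon_k\to 0$ block by block, being careful about the transition regions between consecutive blocks; this is likely the cleanest route and I would present it that way, treating the Bradford--Kohut--Mooroogen construction as given.

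The main obstacle I anticipate is the interface between blocks: Theorem~\ref{thm:Bradford} gives a single set with a uniform density bound, but to get a \emph{limit} equal to $1$ I need the deleted measure to actually decay, and naively concatenating scaled copies of the BKM set on $[t_k,t_{k+1}]$ with $\varepsilon_k\to 0$ risks creating new infinite progressions that ``hop'' between blocks — a progression with a very large common difference could take one term from each block and never feel any single block's deletions. Handling this requires either (a) making the blocks grow fast enough that a fixed difference $d$ can contribute at most one term per block only for finitely many blocks, and large-$d$ progressions are handled because they have few terms in $[0,\infty)$ anyway once $d$ exceeds the first few scales — wait, that is false, so instead (b) ensuring the deletion pattern in block $k$ simultaneously handles \emph{all} differences up to some threshold $d^{(k)}\to\infty$, with the per-unit-window deleted measure still $\to 0$; the quantitative tension between ``threshold $\to\infty$'' and ``deleted measure $\to 0$'' is exactly what forces the block lengths $t_{k+1}-t_k$ to grow rapidly, and verifying that this can be done consistently is the crux of the argument. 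Everything else — closedness, the density computation, the reduction to a countable dense set of differences — is routine.
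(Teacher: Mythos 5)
Your overall architecture --- concatenating blocks on which the deleted measure is at most $\varepsilon_k\to 0$, using the Bradford--Kohut--Mooroogen construction as a black box --- is the same as the paper's, and you correctly identify the crux: a single block can only handle progressions whose parameters lie in some bounded range, and one must show that a \emph{finite} window suffices for such a family while the admissible range of parameters grows with $k$. But you do not supply the argument that closes this gap, and the two devices you propose in its place do not work. First, the reduction to a countable dense set of differences is false: if $F$ contains the infinite progression $(b+nd)_{n}$ and $d_j\neq d$, the points $b+nd_j$ drift away from $b+nd$ at linear speed, so $F$ need not contain even a three-term progression of difference $d_j$; no approximation in the common difference is available unless $F$ contains intervals around the progression's points, which it need not. (The paper instead kills all irrational differences at once with the periodic set $\bigcup_{n}[n,n+1-\varepsilon]$ via equidistribution, and the countable set of rational differences by a separate explicit construction --- Lemmas~\ref{lem:equidistribute} and~\ref{Lem:countableElim}.) Second, your option (b) --- ``handle all differences up to a threshold $d^{(k)}\to\infty$ inside block $k$'' --- is exactly the statement that needs proof, and it is not routine: to intersect every progression of difference $d$ entering the block you must delete a set meeting every residue class mod $d$ for every real $d$ up to the threshold, and for irrational $d$ the number of periods needed before equidistribution forces a hit depends on the Diophantine properties of $d$ and is not uniformly bounded by any naive estimate.

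The paper resolves precisely this point with a compactness argument (Lemma~\ref{lem:Gen_Compact_Arg}): starting from a single global set $E_k$ of unit-window density $>1-1/k$ containing no infinite arithmetic progression (Theorem~\ref{thm:Bradford}), one observes that ``some term $a\alpha_n+b$ with $n>m$ lies in the open complement $G_k$'' is an open condition on $(a,b)$, covers the compact parameter box $[1/k,k]\times[-k,k]$ by the increasing open sets obtained from $G_k\cap(-\infty,h)$ as $h\to\infty$, and extracts a finite $h_{k,m}$ such that the bounded window $G_k\cap(\alpha_m/k-k,\,h_{k,m})$ already meets a tail term of every progression with parameters in the box. These windows are then placed disjointly along $[0,\infty)$, each contributing at most $1/k$ to any unit interval it meets. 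That uniformity statement (or an equivalent one) is the missing ingredient in your proposal; without it, the ``quantitative tension'' you point to remains unresolved and the proof is incomplete.
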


The importance of this improvement is that in Lemma~\ref{lem:windowDensity} we prove that \eqref{e:limit1} holds if and only if $0$ is a density point of $\exp(-F)\cup -\exp(-F)$. 
Thus Theorem~\ref{t:AP} will easily imply Theorem~\ref{t:densitywithoutGP}. 

In an even more recent paper Kolountzakis and Papageorgiou \cite{KP} improved Theorem~\ref{thm:Bradford} in the following different way. 

\begin{theorem}[Koluntzakis-Papageorgiou \cite{KP}]
\label{t:KP} 
Let $A=\{a_1, a_2,\ldots\}\subset (0,\infty)$ such that
\begin{equation}\label{e:KPcondition}
    a_{n+1}-a_n\ge 1 \quad (\forall n\in\N)
    \qquad\text{and}\qquad
    \lim_{n\to\infty}\frac{\log a_n}{n}=0.
\end{equation}
Then for every $\varepsilon>0$ there exists a Lebesgue measurable set $E\subset\R$ such that
\begin{equation}\label{e:1minusepsinKP}
    \lambda(E\cap[t,t+1])\ge 1-\varepsilon 
    \qquad (\forall t\in\R),
\end{equation}
but $E$ does not contain any similar copy of $A$.
\end{theorem}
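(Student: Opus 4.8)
The plan is to take $E$ to be the complement of a countable union of periodic ``combs'' of very thin holes, chosen with full knowledge of $A$, and to force every similar copy of $A$ into one of the holes by an equidistribution argument. First I would make three easy reductions. Since a similar copy $aA+b$ fails to be contained in $E$ as soon as one of its elements misses $E$, and since for $a>0$ the copy tends to $+\infty$, it suffices to build a set $B\subseteq(0,\infty)$ of small ``window density'' meeting the eventual positive tail of $aA+b$ for every $a>0$ and $b\in\R$; taking $E=\R\setminus(B\cup(-B))$ then handles $a<0$ (whose copies tend to $-\infty$) by reflection. Next, for a comb $C_\pi=\pi\Z+[0,w)$ one has $(aA+b)\cap C_\pi\neq\emptyset$ exactly when the sequence $\{(a a_n+b)\bmod\pi\}_{n}$ visits $[0,w)$, i.e. when $\{(a_n+b/a)\bmod\theta\}_n$ visits $[0,w/a)$ for $\theta=\pi/a$; so the whole problem becomes one about the distribution of $(a_n)$ modulo real numbers.

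For the construction, fix $w_1\ge w_2\ge\cdots$ with $\sum_l w_l<\varepsilon/3$, choose periods $\pi_l\ge 1$ (adapted to $A$, see below), and set $B=\bigcup_l C_{\pi_l}$ with $C_{\pi_l}=\pi_l\Z+[0,w_l)$. Each comb meets a unit window in length at most $3w_l$ (as $\pi_l\ge 1$), so $\lambda(E\cap[t,t+1])\ge 1-3\sum_l w_l>1-\varepsilon$ for all $t$, giving \eqref{e:1minusepsinKP}. The core claim is that $B$ meets the positive tail of every $aA+b$. For a ``generic'' dilation $a$ — one for which $\{a a_n\bmod\pi_1\}_n$ is dense in $[0,\pi_1)$ — Weyl-type equidistribution, which here uses $a_{n+1}-a_n\ge1$ to spread the increments and $\log a_n=o(n)$ to exclude degenerate growth, shows that the orbit is in fact equidistributed, hence visits $[0,w_1)$ for every translate $b$; so the first comb already catches the copy.

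It remains to treat the ``resonant'' dilations, those for which $(a_n)$ is (nearly) periodic modulo $\pi_1/a$, which forces $A$ into a thin periodic set. Here one passes to a comb $C_{\pi_l}$ with $\pi_l/\pi_1$ irrational: $(a_n)$ cannot be simultaneously nearly periodic modulo $\pi_1/a$ and modulo $\pi_l/a$ for incommensurable $\pi_1,\pi_l$, so some comb of the family is non-resonant for $a$ and, by the equidistribution step, catches every translate $aA+b$. The hypothesis $\log a_n=o(n)$ enters twice: it keeps $A$ dense enough that it cannot hide inside a thin periodic pattern of too many scales at once, and it is used quantitatively to guarantee the relevant orbit reaches $[0,w_l)$ within a controlled number of terms, which is what lets one keep the sizes $w_l$ (hence the total density) under control.

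The step I expect to be the main obstacle is exactly this last one: arranging the countable family $\{(\pi_l,w_l)\}$ so that \emph{every} pair $(a,b)$ — all resonant dilations and all translates together — is defeated by at least one comb, while still $\sum_l w_l<\varepsilon$. A crude family of incommensurable periods is not enough, since for a fixed $a$ the set $A$ can in principle lie in the intersection of the period-$(\pi_l/a)$ patterns, which has positive density once $\sum_l w_l/\pi_l<\infty$; so the periods (and, if needed, the positions of the holes) must be selected adaptively from the detailed structure of $A$, pushing the condition $\log a_n=o(n)$ hard in a quantitative equidistribution estimate. I expect this combinatorial/equidistribution bookkeeping to be the bulk of the proof, while the symmetrisation, the reduction to the positive tail, and the measure estimate are routine.
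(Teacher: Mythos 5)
First, a point of comparison: the paper does not prove Theorem~\ref{t:KP} at all. It is quoted as a known result of Kolountzakis and Papageorgiou \cite{KP} and used as a black box (together with Lemma~\ref{lem:Gen_Compact_Arg}) to deduce Theorem~\ref{t:improvedKP}. So your attempt can only be measured against the argument in \cite{KP}, which is probabilistic in the spirit of \cite{kolountzakis_1997}, not deterministic as yours is.

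Your proposal has a genuine gap, and it sits exactly where you yourself predict the bulk of the work to be --- which means the proof is not actually there. Two concrete problems. First, the ``generic dilation'' step is unsupported: for an arbitrary sequence with $a_{n+1}-a_n\ge 1$ and $\log a_n=o(n)$ there is no Weyl-type theorem giving equidistribution (or even density) of $(a a_n \bmod \pi)_n$ for \emph{every} dilation $a$ outside a set with exploitable structure. What is true is equidistribution for Lebesgue-almost every $a$ --- this is essentially what \cite{kolountzakis_1997} and the paper's Proposition~\ref{p:modifiedK} exploit --- but the exceptional set of dilations is in general uncountable and carries no ``resonance = near-periodicity'' structure, so it is not a dichotomy that incommensurable comb periods can resolve. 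Second, and consequently, the scheme for defeating the exceptional $a$'s is a hope rather than an argument; you observe yourself that a crude family of incommensurable periods fails because the complement of the union of combs retains positive density. The actual proof in \cite{KP} sidesteps equidistribution entirely: holes of relative measure $\varepsilon$ are placed \emph{randomly} and independently in consecutive blocks; the separation $a_{n+1}-a_n\ge 1$ forces the first $N$ points of a copy $aA+b$ (for $a$ in a fixed dyadic range) into distinct blocks, so the probability that the copy escapes all holes is at most $(1-c\varepsilon)^N$; and $\log a_n=o(n)$ is used as a counting hypothesis --- a discretization of the parameter set $(a,b)$ fine enough to control all copies has only $e^{o(N)}$ cells, so a union bound beats the exponential escape probability, and a copy surviving the construction exists with probability zero. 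Your routine reductions (symmetrization, passing to the positive tail, the measure estimate $\lambda(C_{\pi_l}\cap[t,t+1])\le 3w_l$ for $\pi_l\ge 1$) are fine, but without a replacement for this probabilistic core the argument does not close.
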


We slightly improve this result by showing Theorem~\ref{t:improvedKP}, which states that, similarly as above, in Theorem~\ref{t:KP} we can replace \eqref{e:1minusepsinKP} by the requirement
\begin{equation*}
    \lim_{t\to\infty}\lambda(E\cap [t,t+1])=1.   
\end{equation*}

Finally, in Section~\ref{s:Cantor} we study the following two closely related problems:
How can we guarantee that a given subset of $\R$ contains infinite arithmetic progressions with every positive common difference,
or infinite geometric progressions with every quotient between $0$ and $1$? 
As an application we show (Corollaries~\ref{cor:dyadicsymmetric_new} and \ref{cor:mafatCantor}) that under mild conditions all
symmetric Cantor sets with positive measure 
contain geometric progressions with any given quotient between $0$ and $1$.

To avoid trivial examples, by arithmetic progressions and geometric progressions we mean only non-constant sequences. 
Since any infinite geometric progression with negative ratio contains an infinite geometric progression with positive ratio, and since geometric progressions with ratio greater than $1$ are not contained in any bounded set, we consider only geometric progressions with ratio between $0$ and $1$.

\section{Construction of Large Sets without infinite arithmetic / geometric progressions}

In this section we prove Theorems~\ref{t:densitywithoutGP} and \ref{t:AP} and the improvement of Theorem~\ref{t:KP} mentioned in the Introduction. In order to prove Theorem~\ref{t:AP} we need Theorem~\ref{thm:Bradford}. 
To make the proofs of Theorems~\ref{t:densitywithoutGP} and \ref{t:AP} self-contained we give a new proof of Theorem~\ref{thm:Bradford}, which is considerably shorter than the original proof in \cite{BKM}.

First we prove two simple lemmas from which Theorem~\ref{thm:Bradford} will follow very easily.

\begin{lemma}
\label{lem:equidistribute}
Given $\varepsilon > 0$, we can find $E \subset [0,\infty)$ which contains no arithmetic progression of irrational common difference such that $\lambda(E \cap [t,t+1]) \geq 1- \varepsilon$ for 
every $t\ge 0$.
\end{lemma}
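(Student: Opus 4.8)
The plan is to build $E$ as the complement (within $[0,\infty)$) of a sparse union of small intervals, arranged so that any arithmetic progression with irrational common difference must hit one of them. First I would fix an enumeration $(\alpha_k)_{k=1}^\infty$ of a countable dense set of candidate ``irrational common differences'' — actually, since an arithmetic progression with common difference $d$ is the same as one with difference $-d$ and we only care whether some term is excluded, it is cleanest to handle \emph{all} irrational $d$ simultaneously via an equidistribution argument rather than enumerating them. The key classical input is Weyl's equidistribution theorem: for irrational $d$, the sequence $(nd \bmod 1)_{n\ge 1}$ is equidistributed in $[0,1)$, so in particular it is dense, and moreover the number of $n\le N$ with $nd \bmod 1$ in a fixed interval $I$ grows like $|I|\cdot N$.

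Here is the construction I would carry out. Choose a decreasing sequence of scales and, in the window $[t,t+1]$ for integer $t$, remove from $[t,t+1]$ a single open interval $J_t$ of length $\le \varepsilon$ (so the density condition $\lambda(E\cap[t,t+1])\ge 1-\varepsilon$ is immediate for integer $t$, and with a factor-of-two care for all real $t$, e.g.\ by using length $\le \varepsilon/2$ and placing the $J_t$ well inside each unit window). The location of $J_t$ inside $[t,t+1]$ — call its ``phase'' $\phi_t\in[0,1)$ — is the only freedom, and the goal is to pick the phases $(\phi_t)$ so that for every irrational $d$ and every starting point $a$, some term $a+nd$ lands in some $J_t$. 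Writing $a+nd$'s window index as $t=\lfloor a+nd\rfloor$ and its phase as $\{a+nd\}$, we need: for each irrational $d$ and each $a$, there are infinitely many $n$ with $\{a+nd\}$ inside the interval of phases occupied by $J_{\lfloor a+nd\rfloor}$.

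The mechanism I would use: partition the integer windows into blocks, and on the $m$-th block let the phase $\phi_t$ of $J_t$ sweep slowly across a fine grid of $[0,1)$, so that the union $\bigcup_{t\in \text{block }m} J_t$, projected to the phase coordinate, covers all of $[0,1)$ (or covers a subinterval of length tending to $1$). Then for a fixed irrational $d$ and fixed $a$, equidistribution of $\{a+nd\}$ guarantees that within any sufficiently long run of consecutive $n$ the phases $\{a+nd\}$ hit every grid cell; choosing the block lengths growing fast enough relative to how ``slow'' the sweep is, one forces a coincidence $\{a+nd\}\in J_{\lfloor a+nd\rfloor}$. Because this must work for \emph{every} irrational $d$, including $d$ with very bad Diophantine approximation (where equidistribution is slow), the block lengths must be chosen by a diagonalization: enumerate a dense sequence of irrationals and reals $a$, and make block $m$ long enough to trap progressions with parameters from the first $m$ enumerated pairs; a compactness/density argument then upgrades this to all irrational $d$ and all $a$, since an arithmetic progression with irrational difference whose parameters are close to a trapped one is itself eventually trapped (the ``target'' intervals $J_t$ have fixed positive length, so small perturbations of $(a,d)$ still land in them for the relevant $n$).

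The main obstacle I expect is exactly this uniformity over all irrational $d$: equidistribution of $(nd)$ has no rate uniform in $d$, so one cannot simply pick block lengths once and for all. The fix — and the delicate part of the write-up — is the diagonalization together with the perturbation argument showing that trapping a dense set of $(a,d)$ suffices. A secondary technical point is handling \emph{all} real $t$ (not just integers) in the density bound $\lambda(E\cap[t,t+1])\ge 1-\varepsilon$: a window $[t,t+1]$ straddles two integer windows and hence can meet two of the removed intervals, so I would use removed intervals of length $\le \varepsilon/2$, or equivalently prove the bound $1-\varepsilon/2$ for integer $t$ and deduce $1-\varepsilon$ for all $t$. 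Everything else — the measure bookkeeping and the verification that no arithmetic progression of irrational common difference survives — is routine once the phase-sweep schedule is in place.
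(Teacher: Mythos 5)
Your construction is far more elaborate than necessary, and the elaborate part is exactly where the gap sits. You propose sweeping the phase $\phi_t$ of the removed interval across $[0,1)$ from window to window, and then handling the lack of a uniform equidistribution rate over all irrational $d$ by diagonalizing over a countable dense set of parameters $(a,d)$ and upgrading via a perturbation argument. That upgrade does not close: each enumerated pair $(a_0,d_0)$ gets trapped only at some specific large index $n$ in some specific block, and the set of $(a,d)$ trapped along with it at that same $n$ is a neighborhood whose width in the $d$-coordinate shrinks like $1/n$. A countable dense family of shrinking neighborhoods need not cover the uncountable set of all $(a,d)$ with $d$ irrational, so "trap a dense set and perturb" is not a proof — it is precisely the statement you would still need to prove, and with a phase-sweeping $E$ it is not clear it is even true.

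The fix is to remove the freedom you introduced: take the removed intervals at the \emph{same} phase in every window, i.e.\ $E=\bigcup_{n=0}^{\infty}[n,\,n+1-\varepsilon]$, which is what the paper does. Because $E^c\cap[0,\infty)$ is $1$-periodic, whether $b+n\Delta$ lies in a gap depends only on its fractional part $\{b+n\Delta\}$, and for irrational $\Delta$ these fractional parts are dense in $[0,1)$ — no rate, no uniformity in $\Delta$, and no quantitative equidistribution is needed, only density. Hence some term has fractional part in $(1-\varepsilon,1)$ and falls outside $E$. Periodicity also gives $\lambda(E\cap[t,t+1])=1-\varepsilon$ for every real $t\ge 0$ on the nose, so your worry about windows straddling two removed intervals disappears as well. (Your instinct to vary the phases is what Lemma~\ref{Lem:countableElim} does, but there it is applied to a \emph{countable} list of common differences, where placing one blocking interval per parameter genuinely suffices; for the uncountable family of irrational differences the periodic construction is the right tool.)
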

\begin{proof}
Fix $\varepsilon >0$. Let $E := \bigcup_{n=0}^\infty [n,n+1-\varepsilon]$. Clearly, $\lambda(E \cap [t,t+1]) \geq 1- \varepsilon$ for 
every $t\ge 0$.
Let $(b+n\Delta)_{n=0}^\infty$ be an arithmetic progression with $\Delta$ irrational. Since $\Delta$ is irrational, $(\{ b+n\Delta \})_{n=0}^\infty$ is dense in $[0,1)$, where $\{ \cdot \}$ denotes the fractional part. Hence, $E$ contains no arithmetic progression of irrational common difference $\Delta$. 
\end{proof}

\begin{lemma}
\label{Lem:countableElim}
Fix $\varepsilon >0$. 
Let $\Delta_1,\Delta_2,\dots$ be a countable collection of positive real numbers. Then there exists a measurable $E \subset [0,\infty)$ containing no arithmetic progression with common difference $\Delta_n$ for all $n$, and such that $\lambda(E \cap [t,t+1]) \geq 1-\varepsilon$ for every non-negative real $t$. 
\end{lemma}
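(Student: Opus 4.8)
The plan is to reduce the countable case to the single-difference case handled in Lemma~\ref{lem:equidistribute}, using a scaling trick together with a summable budgeting of the error. For each fixed $n$, I would first produce a set avoiding arithmetic progressions with common difference $\Delta_n$ by rescaling the construction of Lemma~\ref{lem:equidistribute}: note that a set contains an arithmetic progression with common difference $\Delta_n$ if and only if its image under $x\mapsto x/\Delta_n$ contains an arithmetic progression with common difference $1$, hence (after a further rescaling by an irrational factor) it suffices to take a dilate of the set $E$ from Lemma~\ref{lem:equidistribute}. Concretely, fixing any irrational $\alpha>0$, the set $E_n \defeq \alpha\Delta_n\, E$, which equals $\bigcup_{k\ge 0}[\alpha\Delta_n k,\ \alpha\Delta_n(k+1-\varepsilon_n)]$ for a suitable $\varepsilon_n$, contains no arithmetic progression with common difference $\Delta_n$, since such a progression scaled by $1/(\alpha\Delta_n)$ would have irrational common difference $1/\alpha$, contradicting the density statement in Lemma~\ref{lem:equidistribute}.

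Next I would choose the parameters $\varepsilon_n$ so that the complements $[0,\infty)\setminus E_n$ are ``sparse'' in a quantitative, window-uniform way, and then intersect. Set $E \defeq \bigcap_{n=1}^\infty E_n$. Then $E$ contains no arithmetic progression with common difference $\Delta_n$ for any $n$, since it is contained in each $E_n$. For the density bound, observe that on any window $[t,t+1]$ we have
\begin{equation*}
    \lambda\bigl(E\cap[t,t+1]\bigr)\ \ge\ 1-\sum_{n=1}^\infty \lambda\bigl(([t,t+1]\setminus E_n)\bigr),
\end{equation*}
so it is enough to control each term. The complement of $E_n$ within a unit window has measure at most $\varepsilon_n$ times a factor accounting for how many periods of length $\alpha\Delta_n$ fit in an interval of length $1$; a crude bound of the form $\lambda([t,t+1]\setminus E_n)\le \varepsilon_n\bigl(1 + 1/(\alpha\Delta_n)\bigr)$ suffices. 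Hence choosing $\varepsilon_n$ with $\sum_n \varepsilon_n\bigl(1+1/(\alpha\Delta_n)\bigr)\le\varepsilon$ — for instance $\varepsilon_n = \varepsilon\, 2^{-n}/(1+1/(\alpha\Delta_n))$ — gives $\lambda(E\cap[t,t+1])\ge 1-\varepsilon$ for every $t\ge 0$, as required. Measurability of $E$ is immediate since it is a countable intersection of closed (indeed $F_\sigma$, in fact closed) sets.

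The main obstacle, though a mild one, is the bookkeeping in the window-density estimate: one must ensure the per-window measure of $[t,t+1]\setminus E_n$ is bounded by a quantity that is both summable in $n$ and independent of $t$, and for small $\Delta_n$ the periodic ``gaps'' of $E_n$ become dense, forcing $\varepsilon_n$ to shrink fast enough to compensate for the $1/(\alpha\Delta_n)$ factor. This is handled simply by folding that factor into the definition of $\varepsilon_n$. A secondary point worth stating carefully is that the $E_n$ are genuinely only avoiding progressions of common difference exactly $\Delta_n$ (not all rational multiples), which is all the lemma claims; no extra care is needed there. With these choices the proof is essentially a one-paragraph verification.
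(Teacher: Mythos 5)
Your overall strategy is sound and genuinely different from the paper's. You reduce to the single "irrational difference" case of Lemma~\ref{lem:equidistribute} by dilating that set by $\alpha\Delta_n$ with $\alpha$ irrational (so that a $\Delta_n$-progression in $E_n$ pulls back to a $1/\alpha$-progression in the periodic set, which is impossible), and then intersect over $n$ with a summable error budget. The paper instead removes, for each pair (difference index, starting-point window), a single interval pushed far out along the axis, arranged so that every unit window meets at most one removed interval; that yields the clean bound $1-\varepsilon$ per window with no summation, and removes only a locally finite union of intervals. Your intersection approach is equally legitimate and arguably more modular, but it lives or dies by the per-window estimate for each $E_n$, and that is where you have a concrete error.

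The bound $\lambda([t,t+1]\setminus E_n)\le \varepsilon_n\bigl(1+1/(\alpha\Delta_n)\bigr)$ is false, and you have the problematic regime backwards. The complement of $E_n$ is a periodic union of gaps of length $\alpha\Delta_n\varepsilon_n$ with period $\alpha\Delta_n$; a unit window meets at most $1/(\alpha\Delta_n)+O(1)$ of them, so the correct estimate is of the form $\lambda([t,t+1]\setminus E_n)\le \varepsilon_n\bigl(1+C\alpha\Delta_n\bigr)$. For small $\Delta_n$ the gaps are dense but short and the boundary correction is negligible; the danger is \emph{large} $\Delta_n$, where a single gap of length $\alpha\Delta_n\varepsilon_n$ can swallow an entire unit window (e.g.\ $\alpha\Delta_n=10$, $\varepsilon_n=0.2$ gives gaps of length $2$, so some windows have $\lambda([t,t+1]\setminus E_n)=1$, while your bound claims at most $0.22$). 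Consequently your choice $\varepsilon_n=\varepsilon 2^{-n}/(1+1/(\alpha\Delta_n))$ does not make the errors sum to $\varepsilon$ when the $\Delta_n$ grow quickly (take $\Delta_n\sim 4^n$). The fix is immediate: set $\varepsilon_n=\varepsilon 2^{-n}/(1+C\alpha\Delta_n)$ with the correct factor. With that correction the proof goes through.
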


\begin{proof}
Let $\phi:\mathbb{N} \rightarrow \mathbb{N} \times \mathbb{N}$ be onto and denote $\phi(n) = (\phi_1(n),\phi_2(n))$. 
Let $m_1,m_2,\ldots$ be a quickly increasing sequence of positive integers that we choose later. Let
\begin{equation*}
    J_n=((\phi_2(n)-1)\varepsilon/2, (\phi_2(n)+1)\varepsilon/2)
    \qquad\text{and}\qquad
    I_n=J_n+m_n \Delta_{\phi_1(n)}.
\end{equation*}
Since $I_n$ intersects every arithmetic progression $(b+k\Delta_{\phi_1(n)})_{k=0}^\infty$ for
every $b\in J_n$ and $\phi$ is onto, 
the set $E=[0,\infty)\setminus \cup_{n=1}^\infty I_n$ does not contain an infinite arithmetic progression with common difference $\Delta_n$ for all $n$.

By induction we can make the sequence $(m_n)$ grow fast enough to guarantee that $\inf I_n > 1+\sup I_{n-1}$ for every $n$.
This property clearly guarantees that 
$\lambda(E \cap [t,t+1]) \geq 1-\varepsilon$ for every non-negative real $t$. 
\end{proof}

\begin{proof}[Proof of Theorem~\ref{thm:Bradford}]
Fix $\varepsilon>0$. 
Let $E_1$ be the set we obtain by applying Lemma~\ref{lem:equidistribute} for $\varepsilon/4$.
Let $\Delta_1, \Delta_2, \ldots$ be an enumeration of $\Q\cap(0,\infty)$ and let $E_2$ be the set we obtain by applying Lemma~\ref{Lem:countableElim} to this sequence and $\varepsilon/4$.
Then it can be easily seen that $E=(E_1\cap E_2)\cup -(E_1\cap E_2)$ satisfies all the requirements.
\end{proof}

To get our improvements we need the following lemma.

\begin{restatable}{lemma}{thmGenCompactArg} 
\label{lem:Gen_Compact_Arg}
Let $\alpha_n$ be an increasing sequence of positive reals such that $\alpha_n\to\infty$. 
Suppose that for any $\varepsilon>0$ and $n\in\mathbb{N}$ 
there exists a measurable set $E(\varepsilon,n)\subset \mathbb{R}$
such that $E(\varepsilon,n)$ does not contain any similiar copy
of $\{\alpha_n, \alpha_{n+1},\ldots\}$ and 
\[
\lambda(E(\varepsilon,n)\cap[t,t+1])>1-\varepsilon 
\qquad (\forall t>0).
\]

Then there exists a closed set $F\subset\mathbb{R}$ that does
not contain any similiar copy
of $\{\alpha_n, \alpha_{n+1},\ldots\}$ for any $n$ and 
\[
\lim_{t\to\infty}\lambda(F\cap[t,t+1])=1.
\]
\end{restatable}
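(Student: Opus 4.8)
The plan is to build $F$ as a suitable union of rescaled and translated copies of the sets $E(\varepsilon,n)$, placed on a sequence of windows $[T_k,T_{k+1})$ marching off to infinity, with $\varepsilon\to 0$ along the windows. First I would fix a sequence $\varepsilon_k\downarrow 0$. On the $k$-th window I want $\lambda(F\cap[t,t+1])\ge 1-\varepsilon_k$ for all $t$ in that window, so that \eqref{e:limit1}-type convergence is automatic; this forces me to use one of the sets $E(\varepsilon_k, n)$ there. The subtlety is that a single $E(\varepsilon,n)$ only avoids similar copies of the \emph{one} tail $\{\alpha_n,\alpha_{n+1},\ldots\}$, whereas $F$ must avoid similar copies of \emph{every} tail, equivalently (since the tails are nested) it must avoid similar copies of $\{\alpha_1,\alpha_2,\ldots\}$ itself — wait, no: avoiding every tail is a \emph{weaker} requirement than avoiding the full set, and in fact the natural statement is that $F$ avoids each fixed tail $\{\alpha_n,\alpha_{n+1},\ldots\}$. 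The point of allowing the tail to depend on $n$ is this: a similar copy $a\{\alpha_m,\alpha_{m+1},\ldots\}+b$ of a fixed tail, if it is to fit inside a bounded region, must have small $|a|$; and since $\alpha_m\to\infty$, a similar copy contained in a window of bounded length and lying ``far out'' must in fact have all but finitely many of its terms, i.e. it contains a similar copy of some deep tail $\{\alpha_N,\alpha_{N+1},\ldots\}$ that is genuinely \emph{inside one window}.

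Concretely, here is the construction I would carry out. Choose inductively an increasing sequence of integers $n_1<n_2<\cdots$ and window endpoints $0=T_0<T_1<T_2<\cdots\to\infty$ with $T_{k+1}-T_k\to\infty$. On the window $W_k=[T_k,T_{k+1})$ put $F\cap W_k = E(\varepsilon_k, n_k)\cap W_k$; set $F=\bigcup_k (F\cap W_k)\subset[0,\infty)$, and finally take the closure (or arrange the construction to be closed outright, as in Lemma~\ref{Lem:countableElim}, by making the complement a union of open intervals with gaps). The density statement $\lim_{t\to\infty}\lambda(F\cap[t,t+1])=1$ follows because for large $t$ the interval $[t,t+1]$ lies inside $W_k$ (since $T_{k+1}-T_k\to\infty$, for $t$ large it meets at most one window boundary, and on $W_k$ we have density $\ge 1-\varepsilon_k$, with possibly a negligible contribution from the boundary between $W_{k-1}$ and $W_k$ which we can absorb by using $\varepsilon_{k-1}$ as the bound on an overlap of length $1$) — this is a routine bookkeeping argument. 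The real content is the progression-avoidance claim, which I address next.

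Now fix $n$ and suppose for contradiction that $F$ contains a similar copy $P=(a\alpha_{n+j}+b)_{j=0}^\infty$ of $\{\alpha_n,\alpha_{n+1},\ldots\}$ with $a\neq 0$; WLOG $a>0$ (reflect if needed, but since $F\subset[0,\infty)$ and $\alpha_j\to\infty$ we must have $a>0$ anyway for $P$ to be bounded below). Since $\alpha_{n+j}\to\infty$, the terms $a\alpha_{n+j}+b\to\infty$, so all but finitely many terms of $P$ lie beyond $T_k$ for any fixed $k$; pick $k$ large enough that $n_k>n$, that $\varepsilon_k$ is irrelevant, and — here is the key quantitative choice — large enough that the tail of $P$ beyond $T_k$ still contains $a\alpha_{m}+b$ for some $m\ge n_k$, i.e. $P$ contains a full similar copy $a'\{\alpha_{n_k},\alpha_{n_k+1},\ldots\}+b'$ (with $a'=a$, $b'=b$) of the tail $\{\alpha_{n_k},\ldots\}$; moreover by making the windows $W_k$ long compared to how spread out $P$ is near scale $T_k$ — this is where $T_{k+1}-T_k\to\infty$ is used, together with the fact that once $a$ is fixed the copy of the tail $\{\alpha_{n_k},\ldots\}$ occupies a region whose ``head'' we can locate, but whose full extent is unbounded...

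The main obstacle, and the step I expect to require the most care, is exactly this pigeonholing: a similar copy of an \emph{infinite} tail is unbounded, so it cannot possibly be contained in a single bounded window $W_k$. So the argument above as stated cannot work, and the correct construction must be subtler. The right move, I believe, is to \emph{not} chop into disjoint windows with one set per window, but instead to take $F = \bigcap_{k} G_k$ (or a union-of-reflections version) where $G_k$ agrees with $[0,\infty)$ on $[0,T_k)$ and with $E(\varepsilon_k,k)$ on $[T_k,\infty)$ — i.e. each deep tail is killed by a set that is ``eventually'' equal to an $E(\varepsilon_k,k)$, so that a similar copy of $\{\alpha_k,\alpha_{k+1},\ldots\}$, all of whose terms eventually lie past $T_k$, would have to be (eventually, hence after discarding finitely many terms, hence genuinely) a similar copy of some tail inside $E(\varepsilon_k,k)$, contradicting the defining property of that set. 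The density bound then requires that near any large $t$ only finitely many of the $G_k$ have already ``switched on'', which is arranged by spacing the $T_k$; and the contribution of the $k$-th constraint to the complement near scale $t\approx T_k$ has density at most $\varepsilon_k\to 0$. Making this precise — in particular verifying that the intersection (not just each $G_k$) is closed and still satisfies the density limit, and carefully tracking which finitely many terms of a given similar copy must be deleted — is the technical heart of the proof, but it is the natural ``Baire/diagonal'' packaging and I expect no genuine difficulty once the indexing is set up correctly.
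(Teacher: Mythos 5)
Your first construction (disjoint windows, one $E(\varepsilon_k,n_k)$ per window) you correctly discard: a similar copy of an infinite tail is unbounded and never lies in one window. But your replacement, $F=\bigcap_k G_k$ with $G_k$ equal to $[0,\infty)$ on $[0,T_k)$ and to $E(\varepsilon_k,k)$ on $[T_k,\infty)$, has a genuine gap at exactly the point you flag as needing care. The hypothesis only says that $E(\varepsilon_k,k)$ contains no similar copy of the \emph{one} tail $\{\alpha_k,\alpha_{k+1},\ldots\}$; it may perfectly well contain similar copies of deeper tails $\{\alpha_N,\alpha_{N+1},\ldots\}$ with $N>k$. The part of your hypothetical progression surviving past $T_k$ is a similar copy of such a deeper tail (with $N$ depending on $a,b,T_k$), so no contradiction with the defining property of $E(\varepsilon_k,k)$ follows. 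The paper's first move repairs precisely this: it sets $E_k=\bigcap_{n=1}^\infty E(1/k,n)$, a single set of density $>1-1/k$ on every unit interval that avoids similar copies of \emph{all} tails simultaneously; discarding finitely many terms then stays within its forbidden patterns.

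Even granting that fix, your density bookkeeping fails. In $F=\bigcap_k G_k$ each constraint, once ``switched on'' at $T_k$, removes up to $\varepsilon_k$ from the measure of \emph{every} unit interval thereafter; near $t\approx T_K$ the complement therefore has density up to $\sum_{k\le K}\varepsilon_k$, which is bounded away from $0$ rather than tending to $0$ (spacing the $T_k$ does not help, since the number of activated constraints still grows with $t$). To get $\lambda(F\cap[t,t+1])\to1$ you must also switch each constraint \emph{off}, i.e.\ remove only a \emph{bounded} window of the complement $G_k=\mathbb{R}\setminus E_k$, and then prove that this bounded piece already meets every relevant similar copy. That is the actual content of the lemma and requires the compactness argument the paper supplies: for open $X$ the sets $H_m(X)=\{(a,b): \exists n>m,\ a\alpha_n+b\in X\}$ are open, the box $[1/k,k]\times[-k,k]$ of parameters is compact and covered by $\bigcup_{h>0}H_m(G_k\cap(-\infty,h))$, so some finite height $h_{k,m}$ suffices; combined with the lower bound $a\alpha_n+b>\alpha_m/k-k$ this confines the removed piece of $G_k$ to a bounded interval, and these intervals can be made disjoint and marched to infinity. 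This step is entirely missing from your proposal, and without it the construction does not close.
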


\begin{proof}
For $k\in\mathbb{N}$ let $E_k=\cap_{n=1}^\infty 
E(1/k, n)$.
Then $E_k$ does not contain any similar copy of $\{\alpha_n, \alpha_{n+1},\ldots\}$ for any $n$ and 
\[
\lambda(E_k\cap[t,t+1])>1-\frac1k 
\qquad (\forall t>0).
\]
By taking a closed subset we can suppose that $E_k$ is closed.
Let $G_k=\mathbb{R}\setminus E_k$.

For any set $X\subset \mathbb{R}$ let
\[
H_m(X)=\{(a,b)\in (0,\infty)\times\mathbb{R}\ :\ 
(\exists n> m)\ a\alpha_n + b \in X \}.
\]
Note that $H_m(X)$ is open if $X$ is open and by assumption
$H_m(G_k)=(0,\infty)\times\mathbb{R}$ for any $m$ and $k$.
Hence
$$
[1/k,k]\times[-k,k]\subset
\bigcup_{h>0} H_m(G_k \cap (-\infty,h)),
$$
so by compactness there exists an $h_{k,m} > 0$ such that
$$
[1/k,k]\times[-k,k]\subset H_m(G_k \cap (-\infty,h_{k,m})).
$$

Since for any $(a,b)\in[1/k,k]\times[-k,k]$ and $n> m$ we have
$a\alpha_n + b > \frac{\alpha_m}k -k $, we obtain that 
$$
[1/k,k]\times[-k,k] \subset 
H_m\left(G_k \cap \left(\frac{\alpha_m}k -k, h_{k,m}\right)\right).
$$
By induction, since $\alpha_n \to \infty$, we can choose a sequence of positive integers $m_k$ such that
$$
\frac{\alpha_{m_k}}{k}-k > h_{k-1,m_{k-1}}+1.
$$

Finally, let
$$
G=\bigcup_{k=1}^\infty G_k\cap \left(\frac{\alpha_{m_k}}k -k, h_{k,m_k}\right)
$$
and take $F=[0,\infty)\setminus G$. 
It is easy to check that $F$ has all of the required properties.
\end{proof}

\begin{proof}[Proof of Theorem~\ref{t:AP}]
Let $\alpha_n=n$. Then for each $n$ the similar copies of $\{\alpha_n,\alpha_{n+1},\ldots\}$ are exactly the infinite arithmetic progressions. Thus, by Lemma~\ref{lem:Gen_Compact_Arg}, Theorem~\ref{thm:Bradford} implies Theorem~\ref{t:AP}.
\end{proof}

Lemma~\ref{lem:Gen_Compact_Arg} can be also applied to strengthen slightly Theorem~\ref{t:KP} of Kolountzakis and Papageorgiou.

\begin{theorem}
\label{t:improvedKP} 
Let $A=\{a_1, a_2,\ldots\}\subset (0,\infty)$ such that the following hold:
\begin{equation}\label{e:KP2}
    a_{n+1}-a_n\ge 1 \quad (\forall n\in\N),
\end{equation}
\begin{equation}\label{e:KP3}
    \lim_{n\to\infty}\frac{\log a_n}{n}=0.
\end{equation}
Then there exists a Lebesgue measurable set $E\subset\R$ such that
\begin{equation}\label{e:limitinKP}
\lim_{t\to\infty}\lambda(E\cap[t,t+1])=1.
\end{equation}
but $E$ does not contain any similar copy of $A$.
\end{theorem}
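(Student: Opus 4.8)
The plan is to obtain Theorem~\ref{t:improvedKP} from Theorem~\ref{t:KP} through Lemma~\ref{lem:Gen_Compact_Arg}, in complete analogy with the deduction of Theorem~\ref{t:AP} from Theorem~\ref{thm:Bradford}. I would set $\alpha_n = a_n$. By \eqref{e:KP2} the sequence $(\alpha_n)$ is strictly increasing with $\alpha_n \to \infty$, so the standing hypothesis of Lemma~\ref{lem:Gen_Compact_Arg} on $(\alpha_n)$ holds. It then suffices to supply, for every $\varepsilon > 0$ and every $n \in \N$, a measurable set $E(\varepsilon,n) \subset \R$ that avoids all similar copies of the tail $A_n = \{\alpha_n, \alpha_{n+1}, \ldots\}$ and satisfies $\lambda\bigl(E(\varepsilon,n) \cap [t,t+1]\bigr) > 1 - \varepsilon$ for all $t > 0$. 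Lemma~\ref{lem:Gen_Compact_Arg} then produces a closed $F$ avoiding every similar copy of $A_n$ for all $n$ --- taking $n = 1$, in particular every similar copy of $A$ --- with $\lim_{t\to\infty}\lambda(F \cap [t,t+1]) = 1$; then $E = F$ proves the theorem.

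To build $E(\varepsilon,n)$ I would apply Theorem~\ref{t:KP} directly to the tail $A_n$, so I must verify that $A_n$ satisfies \eqref{e:KPcondition}. The separation $a_{k+1} - a_k \geq 1$ for $k \geq n$ is inherited verbatim from \eqref{e:KP2}. For the growth condition, the $j$-th term of $A_n$ is $a_{n+j-1}$, and one needs $\frac{\log a_{n+j-1}}{j} \to 0$ as $j \to \infty$; factoring $\frac{\log a_{n+j-1}}{j} = \frac{\log a_{n+j-1}}{\,n+j-1\,}\cdot\frac{n+j-1}{j}$ settles this, since the first factor tends to $0$ by \eqref{e:KP3} and the second tends to $1$. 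Theorem~\ref{t:KP} applied to $A_n$ with the prescribed $\varepsilon$ then yields a measurable set with $\lambda(\,\cdot\,\cap[t,t+1]) \geq 1 - \varepsilon$ for all $t \in \R$ (hence for all $t > 0$) containing no similar copy of $A_n$; this is the $E(\varepsilon,n)$ required by Lemma~\ref{lem:Gen_Compact_Arg}.

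I do not expect a genuine obstacle: the real content is already packaged in Lemma~\ref{lem:Gen_Compact_Arg} and in Theorem~\ref{t:KP}, and what remains is only to check that the hypotheses \eqref{e:KP2}--\eqref{e:KP3} are stable under passing to a tail of the sequence. The one point worth a sentence of care is the index shift in \eqref{e:KP3}: because that condition is normalized by the index, re-indexing $A$ could a priori matter, but the displayed factorization makes clear that it does not.
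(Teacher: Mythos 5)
Your proposal is correct and follows essentially the same route as the paper: both deduce the theorem by feeding Theorem~\ref{t:KP} into Lemma~\ref{lem:Gen_Compact_Arg} with $\alpha_n=a_n$, the only thing to check being that every tail of $A$ still satisfies \eqref{e:KP2} and \eqref{e:KP3}, which you verify by the same index-shift factorization the paper uses.
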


\begin{proof}
To check that Lemma \ref{lem:Gen_Compact_Arg} can be applied to Theorem~\ref{t:KP} to obtain Theorem~\ref{t:improvedKP}, it suffices to show that for any sequence $(a_n)$ satisfying \eqref{e:KP2} and \eqref{e:KP3}, each of its tails $(a_n)_{n>k}$ also satisfies \eqref{e:KP2} and \eqref{e:KP3}. For \eqref{e:KP2} this is clear. 
For \eqref{e:KP3}, fix $k \in \mathbb{N}$ and let $b_n = a_{n+k}$ for all $n \in \mathbb{N}$. Since $\{a_n\}$ satisfies \eqref{e:KP3}, we have the following:
\begin{align*}
    \lim_{n \rightarrow \infty} \frac{\log b_n}{n} = \lim_{n \rightarrow \infty} \frac{\log(a_{n+k})/(n+k)}{n/(n+k)} =  \frac{\lim_{n \rightarrow \infty}\log(a_{n+k})/(n+k)}{\lim_{n \rightarrow \infty }n/(n+k)} = 0
\end{align*}
Hence, the sequence $(b_n)$ satisfies \eqref{e:KP3}, which completes the proof.
\end{proof}

In order to show that Theorem~\ref{t:AP} implies Theorem~\ref{t:densitywithoutGP} we need the following Lemma.
\begin{lemma}
\label{lem:windowDensity}
The following three conditions are equivalent for any measurable $E \subset (0,\infty)$:
\begin{align*}
    &\text{(i)}\quad \lim_{t \rightarrow 0^+} \frac{\lambda(E \cap [0,t])}{t} = 1,\\
    &\text{(ii)}\quad \lim_{\mu \rightarrow \infty}\frac{\lambda(-\log E \cap [\mu, \mu+\nu])}{\nu} = 1 \quad \text{for all fixed} \quad \nu>0,\\ 
   &\text{(iii)}\quad \lim_{\mu \rightarrow \infty}{\lambda(-\log E \cap [\mu, \mu+1])} = 1.\end{align*}
\end{lemma}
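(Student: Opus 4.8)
The plan is to collapse all three conditions to statements about one fixed function via the substitution $x=e^{-u}$ (equivalently $u=-\log x$), which is an order-reversing homeomorphism of $(0,\infty)$ onto $\R$ sending a right-neighborhood of $0$ to a neighborhood of $+\infty$ and pulling the measure $dx$ back to $e^{-u}\,du$. Set $F\defeq-\log E$ (measurable, since $-\log$ is a homeomorphism) and $\chi\defeq\mathbf{1}_{\R\setminus F}$. First I would record the change-of-variables identities. For $t\in(0,1)$ and $\mu\defeq-\log t$,
\[
\frac{\lambda(E\cap[0,t])}{t}=e^{\mu}\int_{\mu}^{\infty}\mathbf{1}_F(u)\,e^{-u}\,du=1-\int_{0}^{\infty}\chi(\mu+s)\,e^{-s}\,ds,
\]
where the last equality uses $e^{\mu}\int_\mu^\infty e^{-u}\,du=1$ and the substitution $u=\mu+s$; similarly one gets $\lambda(F\cap[\mu,\mu+\nu])=\nu-\int_0^\nu\chi(\mu+s)\,ds$. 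Writing $g(\mu)\defeq\int_0^\infty\chi(\mu+s)e^{-s}\,ds$ and $h_\nu(\mu)\defeq\int_0^\nu\chi(\mu+s)\,ds$, and noting $t\to0^+$ iff $\mu\to\infty$, conditions (i), (ii), (iii) become, respectively: $g(\mu)\to0$; $h_\nu(\mu)\to0$ for every $\nu>0$; and $h_1(\mu)\to0$, all as $\mu\to\infty$. So it remains to prove these three limit statements are equivalent.

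I would prove the equivalence by a short cycle. For ``$h_1\to0\Rightarrow h_\nu\to0$ for all $\nu$'', split $[0,\nu]$ into at most $\lceil\nu\rceil$ unit intervals to get $h_\nu(\mu)\le\sum_{j=0}^{\lceil\nu\rceil-1}h_1(\mu+j)$, a finite sum of terms each tending to $0$. For ``$h_\nu\to0$ for all $\nu\Rightarrow g\to0$'', given $\varepsilon>0$ choose $N$ with $e^{-N}<\varepsilon/2$ and bound $g(\mu)\le\int_0^N\chi(\mu+s)\,ds+\int_N^\infty e^{-s}\,ds\le h_N(\mu)+\varepsilon/2$, which is $<\varepsilon$ once $\mu$ is large. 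For ``$g\to0\Rightarrow h_1\to0$'', use $g(\mu)\ge\int_0^1\chi(\mu+s)e^{-s}\,ds\ge e^{-1}h_1(\mu)$. This closes the loop among (i), (ii), (iii).

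I do not expect a serious obstacle: the content is essentially bookkeeping. The one point requiring care is the change-of-variables identity for (i) — tracking that $x\in(0,t]$ corresponds to $u\in[-\log t,\infty)$, that $dx$ pulls back to $e^{-u}\,du$, and that $\mathbf{1}_E(e^{-u})=\mathbf{1}_F(u)$ — together with the observation that conditions (ii)--(iii) only constrain $F$ near $+\infty$, i.e. $E$ near $0$, which is exactly why (i) is the correct statement on the $x$-side. I would close by noting the intended application: taking $E=\exp(-F)$ for the closed set $F$ of Theorem~\ref{t:AP}, the equivalence (i)$\Leftrightarrow$(iii) shows that $0$ is a right-density point of $\exp(-F)$, and adjoining the reflection $-\exp(-F)$ makes $0$ a (two-sided) Lebesgue density point, thereby yielding Theorem~\ref{t:densitywithoutGP}.
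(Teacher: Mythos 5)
Your proof is correct and follows essentially the same route as the paper: substitute $x=e^{-u}$ to convert the density quotient at $0$ into the exponentially weighted integral $\int_0^\infty\chi(\mu+s)e^{-s}\,ds$ of the indicator of the logarithm of the complement, and then compare this with the unit-window integrals via the elementary bounds $e^{-1}h_1\le g\le h_N+e^{-N}$ and the splitting of $[0,\nu]$ into unit intervals. The only difference is cosmetic — you run the cycle of implications in the opposite order from the paper — so there is nothing substantive to add.
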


\begin{proof}

Let $F=(0,\infty)\setminus E$.
Writing $\lambda(E\cap [0,t])/t = 1 - \lambda(F\cap [0,t])/t = 1 - \frac{1}{t} \int_{0}^t \mathds{1}_F(r) d\lambda(r)$ and applying a change of variable $x = - \log r$, condition $(i)$ is equivalent to
\begin{align*}
 (i')\qquad\qquad   \lim_{\mu \rightarrow \infty} e^{\mu} \int_{\mu}^{\infty} \mathds{1}_{-\log F}(x) e^{-x} d \lambda(x) = 0,
\end{align*}
where $t = e^{-\mu}$. 
We can clearly rewrite 
(ii) as
\begin{align*}
(ii')\qquad (\forall\nu>0)\quad    \lim_{\mu \rightarrow \infty} \int_{\mu}^{\mu +\nu} \mathds{1}_{-\log F}(x) d\lambda(x)=0,
\end{align*}
and (iii) as
\begin{align*}
(iii')\qquad \qquad \qquad  \lim_{\mu \rightarrow \infty} \int_{\mu}^{\mu +1} \mathds{1}_{-\log F}(x) d\lambda(x)=0,
\end{align*}
therefore it is enough to prove $(i') \Longleftrightarrow (ii')\Longleftrightarrow (iii')$.

First we show $(i') \implies (ii')$.
We have the following for all $\mu,\nu > 0$ by the non-negativity of the function $\mathds{1}_{-\log F}(x) e^{-x} $ and the monotonicity of the integral:
\begin{align*}
    e^{\mu+\nu
    }\int_{\mu}^\infty \mathds{1}_{-\log F}(x) e^{-x} d\lambda(x) \geq  \int_{\mu}^{\mu + \nu}\mathds{1}_{-\log F}(x) e^{\mu +\nu 
    -x}d\lambda(x) \geq \int_{\mu}^{\mu + \nu}\mathds{1}_{-\log F}(x)d\lambda(x) \geq 0.
\end{align*}

 Now, letting $\mu \rightarrow \infty$ and applying the Squeeze Theorem with condition $(i')$, we obtain $(ii')$. 
 
Since $(ii') \implies (iii')$ is clear it remains to  show $(iii') \implies (i')$. 
For any positive integer $k$ we have
\begin{align*}
   e^{\mu} \int_{\mu}^{\infty}\mathds{1}_{-\log F}(x) e^{-x} d \lambda(x) 
   &\le \sum_{j=1}^k\int_{\mu+j-1}^{\mu+j} \mathds{1}_{- \log F}(x) e^{\mu -x } d\lambda(x) + \int_{\mu+k}^{\infty} e^{\mu -x } d\lambda(x)\\
   &\leq k\cdot \int_{\mu}^{\mu+1} \mathds{1}_{-\log F} (x) d \lambda(x) + e^{-k}.
\end{align*} 
Letting $\mu \rightarrow \infty$ and using $(iii')$ we obtain that 
the limit of the left-hand side is at most $e^{-k}$
for all $k$, so 
the limit is zero as desired.
\end{proof}




\begin{proof}[Proof of Theorem~\ref{t:densitywithoutGP}]
Let $F$ be a set provided by Theorem \ref{t:AP}, let $E=\exp(-F)$ and $C=E \cup -E\cup \{0\}$.
Since $F\subset[0,\infty)$ is closed, $C$ is compact.
Since $F$ does not contain any infinite arithmetic progression, $E$ does not contain any infinite geometric progression and neither does $C$.
Since $F=-\log E$, \eqref{e:limit1} of Theorem~\ref{t:AP} implies that (iii) of Lemma~\ref{lem:windowDensity} holds.
Thus (i) of Lemma~\ref{lem:windowDensity} implies that $0$ is indeed a Lebesgue density point of $E$.
\end{proof}

\section{A sufficient density condition that guarantees infinite geometric progression}

We give a sufficient condition for a measurable set to contain an infinite geometric progression.

\begin{restatable}{proposition}{propIntCondition}
\label{prop:SuffIntCondition}
 Suppose that $E\subset\R$ is a measurable set and for some fixed $r_0>0$ we have
\begin{equation}\label{e:convdensitysum}
   \sum _{n=1} ^{\infty } \int _{E^c \cap [0,r_0 ^n]} \frac{1}{n} x^{\frac{1}{n}-1} \, d\lambda (x) < \infty,
 \end{equation}
where $E^c$ denotes the complement of $E$.
Then there exists $a>0$ and $r \in (0,1)$ such that $(ar^n)_{n \in \mathbb{N }} \subset E$.
\end{restatable}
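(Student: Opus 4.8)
The plan is to fix a density point $a$ for $E$ (really $a$ in the Lebesgue density set, so $\lambda(E^c \cap [a, a+h]) = o(h)$... wait, let me think about the right normalization) and estimate, for a suitable choice of ratio $r$, the measure of the set of "bad" starting points whose geometric progression $(ar^n)$ hits $E^c$. Concretely, fix $r_0$ as in the hypothesis and consider ratios $r$ of the form $r = r_0^{1/m}$... hmm, actually the cleaner route: for fixed $r \in (0,1)$, the progression $(ar^n)_{n\ge 1}$ is contained in $E$ for a given $a > 0$ iff $a \notin \bigcup_{n\ge 1} r^{-n}(E^c \cap (0, ar))$, roughly. Let me reorganize.

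First I would reduce to a Borel--Cantelli type estimate. Fix a large $N$ and a target interval $[c, 2c]$ of starting values $a$; set $r = r_0$. For each $n$, the set of $a \in [c,2c]$ with $a r_0^n \in E^c$ is $r_0^{-n}\bigl(E^c \cap [c r_0^n, 2 c r_0^n]\bigr)$, which has measure $r_0^{-n} \lambda\bigl(E^c \cap [c r_0^n, 2cr_0^n]\bigr)$. Summing over $n$ and invoking \eqref{e:convdensitysum} should show this sum is finite, and by scaling $c$ down we should be able to make the tail sum less than $c$ (the length of $[c,2c]$), so that some $a$ in the interval survives all $n$ — that $a$, together with $r = r_0$, gives the desired progression. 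The role of the weight $\tfrac1n x^{1/n - 1}$ in \eqref{e:convdensitysum} is to encode exactly the substitution $x = a r^n$ with $a$ ranging over a fixed interval: $dx = r^n\, da$ becomes, after reparametrizing $a$ by $a = x/r^n$ and integrating in $x$, the Jacobian factor seen in the hypothesis once we also vary over a logarithmic family of ratios $r = r_0, r_0^{1/2}, r_0^{1/3}, \ldots$ — i.e. $r^n$ with $r = r_0^{1/n}$ is $r_0$, and the $x^{1/n-1}$ comes from differentiating $x^{1/n}$.

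So the more precise plan is: for each $n$, take the ratio $r_n = r_0^{1/n}$ and look at the single term $a r_n^n = a r_0$; but we want one fixed $r$, not one per $n$. The correct device, following the $\log$-to-arithmetic-progression dictionary mentioned earlier in the paper, is to pass to $\log E$ and phrase everything additively: $(ar^n) \subset E$ iff $(\log a + n\log r) \subset \log E$, i.e. $\log E$ contains the arithmetic progression with first term $\log a$ and common difference $\log r < 0$. We want to choose a common difference $d = \log r$ and a starting point so that $\log a + nd \notin \log(E^c)$ for all $n$. Fix the difference to correspond to $r = r_0$ after scaling, parametrize the starting point over an interval $I$ of length $|d|$ so that the translates $I, I+d, I+2d,\ldots$ tile $(-\infty, \log a_0]$, and bound $\sum_n \lambda\bigl(\log(E^c) \cap (I + nd)\bigr)$. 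Changing variables back from the additive $\log$ picture to the multiplicative $x$-picture is precisely what produces $\int_{E^c\cap[0,r_0^n]} \tfrac1n x^{1/n-1}\,d\lambda(x)$ as the bound for the $n$-th term (with the interval $[0, r_0^n]$ arising because $a r_0^n$ ranges over $(0, r_0^n]$ as $a$ ranges over $(0,1]$), and summability of those terms is the hypothesis.

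The main obstacle I anticipate is matching the bookkeeping between the "one fixed ratio $r$" we are required to produce and the "$n$-dependent" weights appearing in \eqref{e:convdensitysum}: one must choose the starting interval and the ratio together so that the $n$-th translate lands inside $[0, r_0^n]$ and the Jacobian of the change of variables is exactly $\tfrac1n x^{1/n-1}$. Once the reparametrization is set up correctly, the rest is a routine Borel--Cantelli argument: the sum \eqref{e:convdensitysum} being finite forces the measure of "bad" starting points to be strictly smaller than the measure of the parameter interval, leaving a positive-measure (in particular nonempty) set of good starting points $a$, any one of which yields $(ar^n)_{n\in\mathbb{N}} \subset E$ for the corresponding $r \in (0,1)$. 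A secondary point to be careful about is that the bad set must be shown measurable and that shrinking the parameter interval (equivalently, letting the starting point tend appropriately) does not spoil the tiling; both are handled by elementary monotone-convergence remarks.
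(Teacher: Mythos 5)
Your plan never settles on a parametrization that matches the weight in \eqref{e:convdensitysum}, and the two routes you do sketch both fail for the same quantitative reason. The weight $\tfrac1n x^{1/n-1}$ is the Jacobian of $x\mapsto x^{1/n}$, i.e.\ of pulling $E^c$ back under the map $r\mapsto r^n$; so the natural parameter for the Borel--Cantelli step is the \emph{ratio} $r$, not the starting point $a$. In your fixed-ratio version the bad set at stage $n$ has measure $r_0^{-n}\lambda\bigl(E^c\cap[cr_0^n,2cr_0^n]\bigr)$, and in the log-tiling version it is $\lambda\bigl(\log(E^c)\cap(I+nd)\bigr)=\int_{E^c\cap\exp(I+nd)}x^{-1}\,d\lambda(x)$; on the relevant range $x\asymp r_0^n$ one has $\tfrac1n x^{1/n-1}\asymp \tfrac{r_0}{n}\,x^{-1}$, so the weight your argument needs exceeds the weight the hypothesis controls by a factor of order $n$, and convergence of \eqref{e:convdensitysum} does not give convergence of the sums you require. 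This is exactly the ``main obstacle'' you flag and then leave unresolved (``once the reparametrization is set up correctly\dots''), so the argument does not close. When you briefly consider the family $r=r_0^{1/n}$ you are one step from the fix, but you reject it for the wrong reason: the point is not to use a different $r$ for each $n$, but to show that the set of ratios $r$ for which the progression fails at some stage $n\ge m$ cannot cover all of $[0,r_0]$ summably.

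The paper argues by contrapositive with $r$ as the variable. If $E$ contains no progression $(ar^n)$, then for every $r\in(0,1)$ the powers $r^n$ lie in $E^c$ for infinitely many $n$ (otherwise a tail, with $a=r^N$, would already give the conclusion --- this is where the freedom in $a$ is spent). Hence for every $m$ the interval $[0,r_0]$ is covered by $\bigcup_{n\ge m}\{r: r^n\in E^c\}$, and $\sigma$-subadditivity together with the substitution $x=r^n$ yields
$r_0\le\sum_{n\ge m}\lambda\bigl(\{r\in[0,r_0]:r^n\in E^c\}\bigr)=\sum_{n\ge m}\int_{E^c\cap[0,r_0^n]}\tfrac1n x^{1/n-1}\,d\lambda(x)$.
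Every tail of the series is therefore at least $r_0>0$, contradicting \eqref{e:convdensitysum}. You would need to replace your starting-point parametrization with this ratio parametrization for the proof to go through.
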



\begin{proof}
We proceed by proving the contrapositive,
so we suppose that there exists no $a>0$ and $r\in(0,1)$ such that $(ar^n)_{n \in \mathbb{N }}\subset E$.
Then for each $r \in (0,1)$, there exists a strictly increasing sequence of natural numbers $(n_m) _{m \in \mathbb{N}}$ dependent on $r$ such that $(r^{n_m})_{m \in \mathbb{N}} \subset E^c $. Now, fix $m \in \mathbb{N} \setminus \{ 0 \}$. 
Let $G_n(x)=x^\frac1n$.
By $\sigma$-subadditivity
and by applying a change of variables $r=G_n(x)=x^\frac1n$
we obtain
\begin{align*}
r_0 &= \lambda ([0,r_0] ) = \lambda ( \{ r \in [0,r_0 ] : r^n \in E^c \text{ for some } n \geq m \} )\\
&\leq \sum _{n=m } ^{\infty } \lambda ( \{ r \in [0,r_0] : r^n \in E^c \}
= \sum _{n=m } ^{\infty } \int _{G_n(E^c) \cap [0,r_0] } \, d \lambda (r) 
= \sum _{n=m } ^{\infty } \int _{E^c \cap [0,r_0 ^{n}] } \frac{1}{n} x^{\frac{1}{n} -1 } \, d \lambda (x).
\end{align*}
Since we have bounded the tail of the non-negative series below by a positive constant, it diverges to infinity.
\end{proof}

\begin{remark}
Since $\int_0^{r_0^n} \frac1n x^{\frac1n-1}\ dx=r_0$,
the series of \eqref{e:convdensitysum} can be viewed as $r_0$ times a series of increasingly narrower weighted densities of $E^c$ at 0. Thus, the sufficient condition is that these densities of $E^c$ converge to $0$ quickly enough 
to make this series converge. From Theorem \ref{t:densitywithoutGP}, we know there exists a set with $0$ as a density point which does not satisfy the sufficient condition. Yet, given measurable $E \subset \mathbb{R}$, we might hope that by translating the condition in the above proposition, we could find a viable translate for a copy of a geometric progression. However, there exist sets of large measure which do not satisfy this condition at any point with density 1, for we can make the density converge as slowly as we like:
indeed, it is well known that there is no bound for the rate of convergence at a Lebesgue density point, see for example \cite{Taylor1959}. For our purposes, this fact manifests itself in the following manner.

\begin{lemma}
\label{lem:monster}
Fix $\varepsilon >0$ and let $f:(0,1) \rightarrow \mathbb{R}^+$ be a continuous, monotone 
non-decreasing
function such that $\lim_{x \rightarrow 0^+} f(x) = 0$. Then there exists an open $G \subset (0,1)$ with measure not exceeding $\varepsilon$, so that for any $p \in [0,1)$ and $t\in(0,1-p)$ we have
\begin{equation*}
      \frac{\lambda(G \cap [p,p+t])}{t} \geq f(t).
 \end{equation*}
\end{lemma}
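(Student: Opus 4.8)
The plan is to build $G$ as a countable union of small open sets placed at a geometrically decreasing sequence of scales near $0$, where the set at scale roughly $2^{-k}$ is a union of short intervals spread out with the right local density so that the worst-case ratio $\lambda(G\cap[p,p+t])/t$ is controlled from below by $f(t)$. The key point is that $f$ tends to $0$, so on the dyadic block $[2^{-k-1},2^{-k}]$ of lengths $t$ we only need to guarantee density about $f(2^{-k})$, which is tiny for large $k$; summing the contributions over all blocks gives total measure $\lesssim \sum_k 2^{-k} f(2^{-k})$, and by throwing away finitely many large-scale blocks (i.e. starting the construction deep enough near $0$) we can force this sum below $\varepsilon$. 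Actually it is cleaner to not discard blocks but to note $\sum_{k\ge 1} 2^{-k} f(2^{-k}) \le f(1/2)\sum 2^{-k} \le f(1/2)$ when $f\le$ const, and then rescale; but since $f$ need not be bounded we instead choose a starting index $k_0$ with $f(2^{-k_0})$ small and only build $G$ inside $(0,2^{-k_0})$, putting nothing (hence density could fail)…

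So the real subtlety, and the main obstacle, is this: we cannot leave any subinterval $[p,p+t]$ with $\lambda(G\cap[p,p+t])=0$, because the required bound $\lambda(G\cap[p,p+t])/t\ge f(t)>0$ must hold for \emph{every} $p\in[0,1)$ and every admissible $t$, including $p$ near $1$ and $t$ small, where $f(t)$ is small but positive. Hence $G$ must have points in every window, at every scale, everywhere in $(0,1)$ — it cannot be concentrated near $0$. The resolution is to work scale-by-scale in a self-similar fashion: for each $j\ge 1$ partition $(0,1)$ into dyadic intervals of length $2^{-j}$, and inside each such interval place a centered open subinterval of length $\delta_j 2^{-j}$ where $\delta_j$ is a small number to be chosen; let $G$ be the union over all $j$ of all these pieces. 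For a given window $[p,p+t]$ pick $j$ with $2^{-j}\le t<2^{-j+1}$; the window then contains at least one full dyadic interval of length $2^{-j}$, hence contains its centered sub-piece of length $\delta_j 2^{-j}>\tfrac{\delta_j}{2}t$, giving the ratio bound $\lambda(G\cap[p,p+t])/t\ge \delta_j/2$. So it suffices to choose $\delta_j$ with $\delta_j/2\ge f(2^{-j+1})$, i.e. $\delta_j=2f(2^{-j+1})$, which is legitimate and tends to $0$ since $f(0^+)=0$ (and we may shrink $\delta_j$ further so that $\delta_j<1$, so the pieces are genuinely inside their intervals); monotonicity of $f$ then upgrades the bound at scale $2^{-j}$ to all $t\in[2^{-j},2^{-j+1})$.

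With this construction the total measure is
\[
\lambda(G)\le \sum_{j\ge 1} 2^{j}\cdot \delta_j 2^{-j} = \sum_{j\ge 1}\delta_j = 2\sum_{j\ge 1} f(2^{-j+1}),
\]
which need not be finite a priori, but since $f(2^{-j+1})\to 0$ we can fix $N$ so that for $j> N$ the terms are summable (say $f(2^{-j+1})<\varepsilon\, 2^{-j-2}$, using that $f\to 0$ monotonically we only need finitely many adjustments), and for $j\le N$ we instead \emph{do not} use centered pieces of length $\delta_j 2^{-j}$ but rather choose those finitely many $\delta_j$ as small as we like; the catch is that we still need the density bound for windows of those large scales $t\in[2^{-j},2^{-j+1})$ with $j\le N$. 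But for such $t$ the window $[p,p+t]$ with $t$ not too small also contains plenty of \emph{deeper}-scale pieces (all the scales $j'>j$), whose total contribution inside the window is at least $\sum_{j'>j}(\text{number of full }2^{-j'}\text{-intervals in the window})\cdot\delta_{j'}2^{-j'}$; choosing the deep-scale $\delta_{j'}$ appropriately makes even this coarse-scale requirement hold. In fact the single-scale argument above already suffices verbatim for \emph{all} $j$, so the only thing to arrange is convergence of $\sum_j\delta_j$ to something $\le\varepsilon$: replace $\delta_j=2f(2^{-j+1})$ by $\delta_j=\min\{2f(2^{-j+1}),\ \varepsilon\,2^{-j}\}$. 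This still satisfies $\delta_j/2\ge f(2^{-j+1})$ provided $\varepsilon 2^{-j}/2\ge f(2^{-j+1})$, which fails for small $j$; so one genuinely must handle small $j$ separately, and the fix is: for the finitely many problematic small $j$, accept a larger $\delta_j$ (still $<1$) and absorb the finite extra measure by shrinking $\varepsilon$ at the outset — i.e. first pick $N$ with $f(2^{-N})<\varepsilon/4$, run the clean construction only for $j>N$ obtaining measure $<\varepsilon/2$ and the density bound for all $t<2^{-N}$, and for $t\ge 2^{-N}$ observe that $[p,p+t]\supset$ some interval of length $2^{-N-1}$ hence meets $G$ in measure $\ge \tfrac{\delta_{N+1}}{2}\cdot 2^{-N-1}\ge f(2^{-N})\cdot t\cdot$(const)$\ge f(t)\cdot t$ after one more use of monotonicity — the large-$t$ windows are actually the \emph{easy} case because $f(t)$ for $t\ge 2^{-N}$ is bounded by $f(1)$ which we can dominate. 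Writing this carefully is the only real work; the geometric idea is straightforward.
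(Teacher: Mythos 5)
There is a genuine gap, and it sits exactly at the crux of the lemma: the measure bound. Your construction assigns to dyadic level $j$ a relative density $\delta_j\approx 2f(2^{-j+1})$, so $\lambda(G)\le\sum_j\delta_j\approx 2\sum_j f(2^{-j+1})$, and you assert that since $f(2^{-j+1})\to 0$ you may choose $N$ so that the tail satisfies $f(2^{-j+1})<\varepsilon 2^{-j-2}$ and is summable. This does not follow: $f\to 0$ carries no rate, and for the functions the lemma is actually needed for --- the paper applies it with $f(x)=1/\log(\log(1/x))$ --- one has $f(2^{-j+1})\sim 1/\log j$, so $\sum_j f(2^{-j+1})$ diverges and so does every tail; starting the construction at $j>N$ changes nothing. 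Your fallback $\delta_j=\min\{2f(2^{-j+1}),\varepsilon 2^{-j}\}$ fails in the opposite direction: for slowly decaying $f$ the minimum equals $\varepsilon 2^{-j}$ for all \emph{large} $j$, so $\delta_j/2<f(2^{-j+1})$ at deep scales and the density bound collapses exactly where it matters (you diagnose the failure as occurring for small $j$, but it occurs for large $j$, and hence cannot be absorbed by finitely many adjustments). A secondary slip: a window of length $t\in[2^{-j},2^{-j+1})$ need not contain a full dyadic interval of length $2^{-j}$ (take $p$ the midpoint of such an interval and $t=2^{-j}$); with centered pieces you still capture half of one, so this only costs a constant, but the claim as written is false.

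The repair requires decoupling the density parameter from the geometric scale, which is what the paper's proof does. Fix the level-$n$ density to be $\varepsilon/2^n$ --- summable by fiat, so $\lambda(G)\le\varepsilon$ --- and let $f$ choose the \emph{scales} rather than the densities: pick $1=\delta_1>\delta_2>\cdots\to 0$ with $f(\delta_n)<\varepsilon/2^{n+1}$ (possible no matter how slowly $f$ decays; the $\delta_n$ simply decrease very fast). At level $n$ lay down an equally spaced grid of $M_n$ intervals of relative length $\varepsilon/2^n$, with $M_n$ so large that every window of length $t\ge\delta_{n+1}$ contains at least $\lfloor M_n t\rfloor-2>\tfrac12 M_n t$ full grid cells. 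A window of length $t\in[\delta_{n+1},\delta_n)$ is then served by level $n$ alone: its $G$-density exceeds $\varepsilon/2^{n+1}>f(\delta_n)\ge f(t)$ by monotonicity. Your self-similar, every-window-at-every-scale intuition is the right geometric picture, but without inverting the roles of scale and density the total measure cannot be controlled.
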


\begin{proof}
Fix $\varepsilon>0$. Choose a sequence $1=\delta_1>\delta_2>\ldots$ such that $f(\delta_n)<\varepsilon/2^{n+1}$ for every $n$.
For each $n$, choose $M_n \gg 0$ satisfying
\begin{align*}
    \frac{(\lfloor M_nx \rfloor - 2)}{M_nx} > \frac{1}{2}
\end{align*}
for all $x \in [\delta_{n+1},\delta_n)$. Set $G = \bigcup_{n=1}^\infty \bigcup_{k=0}^{M_n-1} (\frac{k}{M_n},\frac{1}{M_n}(k+\frac{\varepsilon}{2^n}))$. By subadditivity, the measure of $G$ does not exceed $\varepsilon$. Now, fix $p \in [0,1)$ and 
$t\in(0,1-p)$.
Since 
$0<t<1=\delta_1$,
there exists a natural number $n_0$ so that $t \in [\delta_{n_0+1},\delta_{n_0})$. Clearly, $(\frac{k}{M_{n_0}},\frac{k+1}{M_{n_0}}) \subset [p,p+t]$ for at least $\lfloor M_{n_0}t \rfloor - 2$ integers $0 \leq k \leq M_{n_0}-1$. Hence, we have the following:
\begin{align*}
    \frac{\lambda(G \cap [p,p+t])}{t} \geq \frac{1}{t}\lambda \bigg[ \bigcup_{k=0}^{M_{n_0}-1} \bigg( \frac{k}{M_{n_0}}, \frac{1}{M_{n_0}}\bigg(k + \frac{\varepsilon}{2^{n_0}}\bigg) \bigg) \cap [p,p+t] \bigg] \\ \geq \frac{(\lfloor M_{n_0}t \rfloor -2)\varepsilon }{M_{n_0}t2^{n_0}} > \frac{\varepsilon}{2^{n_0+1}} 
    \ge f(\delta_{n_0})\geq f(t).
\end{align*}
\end{proof}


Applying the above with $f$ as follows
\begin{align*}
    f(x) = \begin{cases} \frac{1}{\log( \log(1/x))} & x < e^{-e} \\ 1 & x \geq e^{-e} \end{cases}.
\end{align*}
one can construct a closed subset $E$ of $\mathbb{R}$ with cofinite measure such that $E-p$ does not satisfy the condition in Proposition \ref{prop:SuffIntCondition} for all $p \in \mathbb{R}$. 
\end{remark}

\subsection{A Remark on a Result of Kolountzakis}

In \cite{kolountzakis_1997} Kolountzakis proved that for any infinite set $A\subset\R$ there exists a measurable set $E\subset[0,1]$ of measure arbitrarily close to $1$ such that almost every similar copy of $A$ is not contained in $E$; that is,
\begin{equation*}
    \lambda(\{(a,b)\in\R^2 \ :\ aA+b \subset E\}) =0.
\end{equation*}

Since in this paper our goal (see Question~\ref{q:gptranslate}) is to find at least one similar copy of at least one infinite geometric progression it is natural to ask if there exists a set $E$ as in the above theorem that works for $A=\{r^n : n\in\N\}$ simultaneously for every $r\in(0,1)$ or at least for almost every $r\in(0,1)$.
It turns out that for almost every $r\in[0,1]$ this can be obtained by a slight modification of the original proof of Kolountzakis \cite[Theorem 1]{kolountzakis_1997}.

\begin{proposition}
\label{p:modifiedK}
There exists a measurable set $E \subset [0,1]$ of measure arbitrarily close to $1$ such that almost every similar copy of almost every infinite geometric progression is not contained in $E$; that is,
\[
\lambda ( \{ (r,a,b) : (ar^n+b)_{n \in \mathbb{N}} \subset E \} )= 0.
\]
\end{proposition}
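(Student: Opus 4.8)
The plan is to adapt Kolountzakis's probabilistic/measure-theoretic argument from \cite[Theorem 1]{kolountzakis_1997}, working in the product space where $r$ becomes an additional variable. Recall the shape of Kolountzakis's construction: for a fixed infinite set $A=\{\alpha_1,\alpha_2,\ldots\}$ converging to $0$, one builds a random open set $E^c\subset[0,1]$ as a union of small intervals placed around a carefully chosen countable (or random) family of points, with total measure $<\varepsilon$, and shows by a Borel--Cantelli / Fubini argument that for Lebesgue-a.e.\ pair $(a,b)$ the dilate $aA+b$ must hit $E^c$ (indeed, infinitely many of its terms do). The key quantitative input is that $\alpha_n\to 0$, so for fixed $(a,b)$ the points $a\alpha_n+b$ accumulate at $b$, and one can arrange the ``holes'' of $E$ near each potential accumulation point to be dense enough in a scale-invariant sense that a.e.\ tail sequence is caught, yet sparse enough (geometrically decaying total length) that $\lambda(E^c)<\varepsilon$.

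First I would set $A=A_r=\{r^n:n\in\N\}$ and observe that the single extra parameter $r\in(0,1)$ only enlarges the parameter space from $(a,b)$ to $(r,a,b)\in(0,1)\times\R^2$; the relevant map is still $(r,a,b,n)\mapsto ar^n+b$, which is jointly measurable, so Fubini applies to the set $Z=\{(r,a,b):(ar^n+b)_{n}\subset E\}$. Next I would run Kolountzakis's construction of $E\subset[0,1]$ with parameter $\varepsilon$, \emph{verbatim} if possible, and then re-examine the measure estimate that shows a.e.\ similar copy escapes $E$: for each fixed $r$, the family $\{A_r\}$ is a single infinite set converging to $0$ with the additional feature $r^{n+1}/r^n=r$ constant, which is only \emph{more} favorable than the general case (Falconer-type slowly converging sequences are the hard ones; geometric sequences converge fast). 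So for each fixed $r\in(0,1)$ the original theorem already gives $\lambda_{(a,b)}(\{(a,b):aA_r+b\subset E\})=0$. Finally, integrating this over $r\in(0,1)$ via Fubini/Tonelli yields $\lambda_{(r,a,b)}(Z)=0$, which is exactly the claim. The ``almost every $r$'' in the statement is then automatic, and in fact one gets ``every $r$''.

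The main subtlety — and the reason the statement says \emph{almost every} $r$ rather than \emph{every} $r$, and the reason a ``slight modification'' is needed rather than a black-box citation — is whether Kolountzakis's construction can be made uniform in $r$, or whether one must instead build $E$ so that the escape property holds for the whole one-parameter family at once. If the construction depends on $A$ (e.g.\ the hole locations are tuned to the specific accumulation rate of $A$), then a single $E$ need not work for all $r$ simultaneously, and one genuinely needs the Fubini argument above, which only delivers the a.e.\ conclusion in $r$ (since a null set in the product can project to a co-null-complement in $r$ only if each fiber is null — which is what we have — but the honest statement one can \emph{cite} is per-fixed-$r$). I would therefore structure the proof as: (1) quote the per-$r$ statement of Kolountzakis with a one-line remark that geometric sequences satisfy its hypotheses; (2) note joint measurability of $(r,a,b)\mapsto(ar^n+b)_n$; (3) apply Tonelli to the indicator of $Z$. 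The only place where a real ``modification'' of the original proof enters is ensuring the set $E$ produced is independent of $r$ — which it is, since the construction in \cite{kolountzakis_1997} only uses that $A$ is infinite and bounded, not its particular gaps — after which the fiberwise null conclusion is immediate and step (3) closes the argument.
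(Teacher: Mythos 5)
Your overall strategy (adapt Kolountzakis's random construction, add $r$ as a third parameter, finish with a Fubini/Tonelli step) is the same as the paper's, but the step on which everything hinges is resolved by a false assertion. You claim that the construction in \cite{kolountzakis_1997} ``only uses that $A$ is infinite and bounded, not its particular gaps,'' and conclude that a single $E$ automatically works for every $r$. This is not so: in Kolountzakis's proof the random set is carved out of $[0,1]$ using periodic grids at scales $1/m_j$, and $m_j$ must be chosen so large that the first $j$ points of the scaled pattern $aA$ land in distinct grid cells; that is, $1/m_j$ must be smaller than (half of) the minimum gap among $a\alpha_1,\dots,a\alpha_j$, which depends on $A$ and hence on $r$. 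Consequently the per-$r$ invocation of the theorem produces a set $E(r)$ that varies with $r$, and your Tonelli step (3) is then vacuous: you cannot integrate over $r$ a fiberwise statement about a set that changes with $r$.

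The actual ``slight modification'' the paper makes is precisely at this point. After reducing (by the same countable-union argument used for the scaling factor $a\in[\alpha,\beta]$) to $r\in[r_0,r_1]$ with $0<r_0<r_1<1$, one notes that the minimum gap of $\{\alpha r^0,\dots,\alpha r^{j-1}\}$ is bounded below by $\alpha r_0^{j}(1-r_1)$ uniformly in $r\in[r_0,r_1]$, and chooses $1/m_j<\tfrac12\alpha r_0^{j}(1-r_1)$. With this uniform choice the probabilistic estimate holds for all $r$ in the range simultaneously for one and the same random set, and the closing computation combines the expectation over the randomness with the integral over $r$:
\begin{equation*}
\mathbb{E}\,\lambda\bigl(\{(a,b,r): (ar^n+b)_{n\in\mathbb{N}}\subset E\}\bigr)
=\int_{r_0}^{r_1}\mathbb{E}\,\lambda\bigl(\{(a,b): (ar^n+b)_{n\in\mathbb{N}}\subset E\}\bigr)\,d\lambda(r)=0,
\end{equation*}
after which a set with the desired properties exists with positive probability. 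Your proposal needs the reduction to $r\in[r_0,r_1]$ and this uniform choice of $m_j$ spelled out; without them the argument does not close.
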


\begin{proof}
The proof is nearly the same as in \cite[Theorem 1]{kolountzakis_1997}, so we just take note of the modifications. First, notice that, the same argument that shows that we can suppose that the scaling factor $a$ is in some fixed interval $[\alpha,\beta]$ for some $0<\alpha<\beta<\infty$,  also shows that it suffices to consider only $r \in [ r_0 , r_1 ]$ where $0<r_0 < r_1 < 1$.
Choose $m_j$ such that $1/m_j < \frac12\alpha r_0^j(1-r_1).$ Then $1/m_j$ is smaller than half of the minimum gap of the numbers $\alpha r^0,\ldots, \alpha r^{j-1}$ any $r\in[r_0,r_1]$ as in the original proof.
For the final step, observe from the last computation in the proof that we have
\[
\mathbb{E} \lambda ( \{ (a,b,r) : (a+br^n) _{n \in \mathbb{N}} \subset E \} ) 
= \int _{r_0} ^{r_1} \mathbb{E} \lambda ( \{ (a,b) : (a+br^n) _{n \in \mathbb{N}} \subset E \} ) d \lambda (r) =0,
\]
and obtain our desired set with positive probability as in the original proof.
\end{proof}

\section{A sufficient condition for Cantor-type sets to contain scaled copies of all convergent infinite geometric progressions.}
\label{s:Cantor}

First we give a sufficient condition that guarantees arithmetic progressions with any positive common difference.

\begin{lemma}\label{l:BCforAP}
If $A\subset\R$ is a measurable set of finite Lebesgue measure
then for every $\Delta>0$ the set $(0,\infty)\setminus A$
contains infinite arithmetic progression with difference $\Delta$.
\end{lemma}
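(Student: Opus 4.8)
The plan is to locate a starting point $b$ inside a window $(T,T+\Delta)$ with $T$ large, chosen so that the whole progression $(b+n\Delta)_{n=0}^\infty$ avoids $A$; since then $b>T>0$, the entire progression lies in $(0,\infty)\setminus A$, as required.

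First I would fix $\Delta>0$ and, for each $T>0$, describe the set of ``bad'' starting points in the window $(T,T+\Delta)$, namely
\[
B_T \;=\; \{\, b\in (T,T+\Delta) : b+n\Delta\in A \text{ for some } n\ge 0 \,\}
\;=\; \bigcup_{n=0}^{\infty}\bigl( (A-n\Delta)\cap (T,T+\Delta) \bigr).
\]
The key observation is that translating the $n$-th piece $(A-n\Delta)\cap(T,T+\Delta)$ by $n\Delta$ carries it, bijectively and measure-preservingly, onto $A\cap(T+n\Delta,\,T+(n+1)\Delta)$, and these intervals are pairwise disjoint. Hence
\[
\lambda(B_T)\;\le\;\sum_{n=0}^{\infty}\lambda\bigl(A\cap(T+n\Delta,\,T+(n+1)\Delta)\bigr)\;=\;\lambda\bigl(A\cap(T,\infty)\bigr).
\]

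Then I would use the hypothesis $\lambda(A)<\infty$: by continuity of measure, $\lambda(A\cap(T,\infty))\to 0$ as $T\to\infty$, so I can pick $T$ with $\lambda(A\cap(T,\infty))<\Delta$. For that $T$ we get $\lambda(B_T)<\Delta=\lambda\bigl((T,T+\Delta)\bigr)$, so $(T,T+\Delta)\setminus B_T$ is nonempty; choosing any $b$ in it yields $b+n\Delta\in(0,\infty)\setminus A$ for every $n\ge 0$, which is an infinite arithmetic progression of common difference $\Delta$ inside $(0,\infty)\setminus A$. (If the intended convention indexes the progression from $n=1$, the same $b$ works verbatim.)

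I do not expect a genuine obstacle here; the only point that needs care is the disjointness of the translated slices. This is exactly what makes the argument work: naively bounding $\lambda(B_T)$ by summing translates of $A$ over a fixed length-$\Delta$ window gives nothing useful, but sliding the window out to infinity, where $A$ has vanishing mass, forces $\lambda(B_T)$ below $\Delta$.
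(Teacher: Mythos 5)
Your proof is correct and rests on the same computation as the paper's: slicing $A$ into consecutive length-$\Delta$ windows and using that the disjoint slices have total measure $\lambda(A)<\infty$. The only cosmetic difference is that the paper translates all slices back into one window and invokes the Borel--Cantelli lemma to conclude that almost every starting point works after discarding finitely many terms, whereas you bound the tail measure directly and start the progression far out; both are the same argument in substance.
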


\begin{proof}
Without loss of generality we can suppose that $\Delta=1$.
Let 
$$
B_n=A\cap(n,n+1]-n\subset(0,1].
$$
Then 
\begin{equation*}
    \sum_{n=0}^\infty \lambda(B_n)=
    \sum_{n=0}^\infty \lambda(A\cap(n,n+1])=
    \lambda(A\cap(0,\infty))<\infty.
\end{equation*}
Thus, by the Borel-Cantelli lemma, almost every $a\in(0,1]$
is contained only in finitely many $B_n$.
On the other hand, if 
$a\in(0,1]$ is contained only in finitely many $B_n$
then for large enough $N$ the arithmetic progression 
$a+N, a+N+1, a+N+2,\ldots$ is contained in $(0,\infty)\setminus A$.
\end{proof}

Now we can give a condition that guarantees an infinite converging geometric progression for every possible quotient.
\begin{lemma}\label{l:or}
Let $I_k=(u_k,v_k)\subset (0,\infty)$
$(k=1,2,\ldots)$ be disjoint open intervals and
$G=\cup_{k=1}^\infty I_k$. If
\begin{equation}\label{e:convsum}
\sum_{k=1}^\infty 
\log v_k - \log u_k < \infty
\qquad\text{or}\qquad
\sum_{k=1}^\infty
\frac{v_k-u_k}{v_k}<\infty    
\end{equation}
then for every $q\in(0,1)$ the set 
$E=(0,\infty)\setminus G$
contains a similar copy of $\bigc{q^n}_{n \geq 0}$ converging to $0$.
\end{lemma}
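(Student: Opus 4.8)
The plan is to reduce the geometric statement to the arithmetic one of Lemma~\ref{l:BCforAP} via the logarithm, exactly as the introduction suggests. Fix $q\in(0,1)$. A similar copy of $(q^n)_{n\ge 0}$ converging to $0$ and lying in $(0,\infty)$ is a set of the form $(a q^n)_{n\ge 0}$ with $a>0$; applying $-\log$ turns this into the arithmetic progression $(-\log a + n\Delta)_{n\ge 0}$ with common difference $\Delta=-\log q>0$. So $E=(0,\infty)\setminus G$ contains a similar copy of $(q^n)_{n\ge 0}$ if and only if $-\log E$ contains an infinite arithmetic progression with difference $\Delta=-\log q$. Since $-\log\big((0,\infty)\setminus G\big)=\R\setminus(-\log G)$, it suffices to show that $A:=-\log G$ has finite Lebesgue measure; then Lemma~\ref{l:BCforAP} (applied after translating $A$ to sit in $(0,\infty)$, or rather a two-sided version of it — note that an arithmetic progression with a fixed difference can always be pushed into $(0,\infty)$, so finiteness of $\lambda(A)$ is all that matters) gives an arithmetic progression with difference $\Delta$ in $\R\setminus A$, and exponentiating back yields the desired geometric progression.

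First I would compute $\lambda(-\log G)$. Since $G=\bigcup_k I_k$ with $I_k=(u_k,v_k)$ disjoint, $-\log G=\bigcup_k(-\log v_k,-\log u_k)$, hence by countable subadditivity
\begin{equation*}
\lambda(-\log G)\le \sum_{k=1}^\infty\big(\log v_k-\log u_k\big).
\end{equation*}
Thus the first alternative in \eqref{e:convsum} immediately gives $\lambda(-\log G)<\infty$, and we are done by the paragraph above. For the second alternative I would use the elementary inequality $\log v_k-\log u_k=-\log(u_k/v_k)=-\log\!\big(1-\tfrac{v_k-u_k}{v_k}\big)$, and since $-\log(1-x)\le \tfrac{x}{1-x}$ — or, crudely, $-\log(1-x)\le 2x$ for $x\in[0,\tfrac12]$ — the tail of $\sum_k(\log v_k-\log u_k)$ is controlled by the tail of $\sum_k \tfrac{v_k-u_k}{v_k}$ (only finitely many terms can have $\tfrac{v_k-u_k}{v_k}>\tfrac12$, and those contribute a finite amount). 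Hence convergence of the second series also forces $\lambda(-\log G)<\infty$, and the two cases merge.

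The one point that needs a little care — and is the only real obstacle — is matching Lemma~\ref{l:BCforAP} exactly: that lemma is stated for $(0,\infty)\setminus A$ with $A$ of finite measure, whereas here the natural object $-\log G$ is a subset of $\R$, not of $(0,\infty)$. I would handle this by noting that for the purpose of finding an \emph{infinite} arithmetic progression with a prescribed difference $\Delta$, we may intersect everything with a half-line: choose any $t_0$ with $-\log G\subset(t_0,\infty)$ is false in general, so instead observe that $A':=(-\log G)\cap(t_0,\infty)$ has finite measure for every $t_0$, apply Lemma~\ref{l:BCforAP} (after translating by $-t_0$) to get an eventual arithmetic progression avoiding $A'$, and then note that by taking the progression far enough to the right it also avoids all of $-\log G$ — this is precisely the "for large enough $N$" mechanism already used inside the proof of Lemma~\ref{l:BCforAP}. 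Equivalently, one can simply re-run the Borel–Cantelli argument of Lemma~\ref{l:BCforAP} directly on the fractional parts of $-\log G$ relative to $\Delta$. Either way the geometric progression $(a q^n)_{n\ge0}$ with $a=e^{-(\text{starting point})}$ lies in $E$ and converges to $0$ since $q\in(0,1)$.
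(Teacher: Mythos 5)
Your proof is correct and follows essentially the same route as the paper's: take $A=-\log G$, show \eqref{e:convsum} forces $\lambda(A)<\infty$ (with the second series controlling the first via $-\log(1-x)\le 2x$ for $x\le\tfrac12$, just as the paper uses $\log(1+y)\le y$), and then invoke Lemma~\ref{l:BCforAP} with $\Delta=-\log q$ and exponentiate. The only superfluous step is your workaround for applying Lemma~\ref{l:BCforAP}: that lemma is already stated for arbitrary measurable $A\subset\R$ of finite measure (only the conclusion lives in $(0,\infty)\setminus A$), so no truncation or translation of $-\log G$ is needed.
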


\begin{proof}
We claim that the second inequality of \eqref{e:convsum} implies the first one, so it is enough to prove the lemma when the first inequality of \eqref{e:convsum} holds. 
Indeed, the second inequality implies that $u_k/v_k\to 1$,
so $v_k\le 2u_k$ for large enough $n$.
Thus, using that for any $0<u<v\le 2u$ we have
\begin{equation*}
    \log v - \log u = \log\left(1+\frac{v-u}{u}\right)\le\frac{v-u}{u}
    \le 2\frac{v-u}{v},
\end{equation*}
we obtain the claim.

Let $A=-\log(G)$. Then
\begin{equation*}
    \lambda(A)=\sum_{k=1}^\infty (\log v_k - \log u_k)<\infty.
\end{equation*}
By Lemma~\ref{l:BCforAP} this implies that $(0,\infty)\setminus A$ contains
infinite arithmetic progression with any given difference $\Delta>0$.
This clearly implies that 
$E=(0,\infty)\setminus G$
contains an infinite geometric progression with any quotient $q\in(0,1)$.
\end{proof}

\begin{definition}\label{d:dsCantor}
Fix a sequence $t_0,t_1,\ldots$ such that 
\begin{equation}\label{e:symmetricCantor}
  \sum_{k=0}^\infty 2^k t_k \le 1.  
\end{equation}
We define the sets $E_0\supset E_1 \supset\ldots$ by induction such that each $E_k$ is the union of $2^k$ closed intervals of equal length.
Let $E_0=[0,1]$. 
If $E_k$ is defined then $E_{k+1}$ is obtained by removing
an open interval of length $t_k$ from the middle of each closed
interval of $E_k$. 
Note that condition \eqref{e:symmetricCantor} guarantees that the length of the intervals of $E_k$ is greater than $t_k$.  
We call the set $E=\cap_{k=0}^\infty E_k$ a \emph{symmetric Cantor set} with parameters $(t_k)$. 
\end{definition}

\begin{corollary}\label{cor:dyadicsymmetric_new}
If 
\begin{equation}\label{e:convforCor}
\sum_{k=0}^\infty 2^k t_k < 1
\qquad\text{and}\qquad
 \sum_{k=0}^\infty 2^k \cdot t_k \cdot k < \infty  
\end{equation}
then the above defined symmetric Cantor set $E=\cap_{k=0}^\infty E_k$ with
parameters $(t_k)$ contains an infinite 
geometric progression with every quotient $q\in(0,1)$.
%
\end{corollary}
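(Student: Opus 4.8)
The plan is to realize the complement of the symmetric Cantor set $E$ inside $(0,1)$ as a countable union of disjoint open intervals (the "gaps" removed at each stage) and verify the second convergence hypothesis in Lemma~\ref{l:or}, namely $\sum_k (v_k-u_k)/v_k<\infty$. Once that is done, Lemma~\ref{l:or} immediately gives a similar copy of $(q^n)_{n\ge 0}$ inside $E\subset(0,\infty)$ for every $q\in(0,1)$, which is exactly the assertion. So the whole argument reduces to bookkeeping on the gaps.

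First I would set up notation: at stage $k$ we remove one open interval of length $t_k$ from the middle of each of the $2^k$ closed intervals composing $E_k$, so altogether the complement $G := (0,1)\setminus E$ is the disjoint union, over $k\ge 0$, of $2^k$ open intervals of length exactly $t_k$. The first hypothesis $\sum_k 2^k t_k<1$ guarantees (as recorded in Definition~\ref{d:dsCantor}) that these removals are legitimate and that $E$ has positive measure $1-\sum_k 2^k t_k>0$; more importantly for us it gives a uniform lower bound on the right endpoints of the gaps. Indeed, writing a removed interval at stage $k$ as $(u,v)$ with $v-u=t_k$, the right endpoint $v$ lies in $(0,1)$, but in fact every gap removed at every stage is contained in $(0,1)$ and — this is the key quantitative point — its right endpoint $v$ is bounded below by the left endpoint of the $E_0=[0,1]$ construction shifted appropriately; more simply, since the leftmost closed subinterval of $E_k$ always starts at $0$ and has length $\ell_k:=2^{-k}(1-\sum_{j<k}2^j t_j)>0$, the gap carved from it has right endpoint $v\ge \ell_k/2 \ge c\cdot 2^{-k}$ for the constant $c:=\tfrac12(1-\sum_j 2^j t_j)>0$, and every other gap at stage $k$ lies further to the right, hence also has $v\ge c\,2^{-k}$.

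With that lower bound in hand, for each gap $(u_k,v_k)$ removed at stage $k$ we estimate $\frac{v_k-u_k}{v_k}=\frac{t_k}{v_k}\le \frac{t_k}{c\,2^{-k}}=\frac{2^k t_k}{c}$. Summing over all gaps — there are $2^k$ of them at stage $k$ — we obtain
\begin{equation*}
\sum_{\text{gaps}}\frac{v_k-u_k}{v_k}\ \le\ \sum_{k=0}^\infty 2^k\cdot\frac{2^k t_k}{c}\ =\ \frac1c\sum_{k=0}^\infty 4^k t_k,
\end{equation*}
which is not obviously finite, so this crude bound is too lossy; the main obstacle is precisely to get a bound that uses only $\sum_k 2^k t_k\, k<\infty$. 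The fix is to be less wasteful: a gap removed at stage $k$ that sits inside a stage-$k$ interval which is \emph{not} the leftmost one has right endpoint comparable to the \emph{location} of that interval, not to its width, and there are only a few stage-$k$ intervals close to $0$. Concretely, order the $2^k$ closed intervals of $E_k$ from left to right; the $j$-th one starts at roughly $(j-1)\ell_k$, so the gap inside it has $v\ge (j-1)\ell_k\ge (j-1)c\,2^{-k}$ for $j\ge 2$, giving $\frac{t_k}{v}\le \frac{2^k t_k}{c(j-1)}$, and summing over $j=2,\dots,2^k$ contributes $\le \frac{2^k t_k}{c}\sum_{j=1}^{2^k-1}\frac1j \le \frac{2^k t_k}{c}(1+\log(2^k)) = O(2^k t_k\, k)$, while the single leftmost gap contributes $O(2^k t_k)$. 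Hence
\begin{equation*}
\sum_{\text{gaps}}\frac{v_k-u_k}{v_k}\ =\ \sum_{k=0}^\infty O\!\left(2^k t_k\,k\right)\ <\ \infty
\end{equation*}
by the second hypothesis in \eqref{e:convforCor}. This verifies the second condition of \eqref{e:convsum} in Lemma~\ref{l:or}, and applying that lemma to $G=(0,1)\setminus E$ (viewed inside $(0,\infty)$) yields the desired infinite geometric progression with arbitrary quotient $q\in(0,1)$. The one subtlety to be careful about is making the "$j$-th interval starts near $(j-1)\ell_k$" claim precise — the intervals of $E_k$ are evenly spaced but separated by gaps of varying sizes $t_0,\dots,t_{k-1}$, so the start of the $j$-th interval is $(j-1)\ell_k$ plus a nonnegative amount, which only helps the lower bound; this is where I expect the bulk of the careful writing to go.
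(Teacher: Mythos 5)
Your proposal is correct and follows essentially the same route as the paper: decompose the complement into the stage-$k$ gaps, lower-bound the right endpoint of the $j$-th gap at stage $k$ by roughly $(j-1/2)\,\lambda(E)/2^k$, and observe that the resulting harmonic sum over $j$ contributes a factor $O(k)$, so the second condition of Lemma~\ref{l:or} follows from $\sum_k 2^k t_k k<\infty$. The "subtlety" you flag at the end is handled exactly as you anticipate, since the gaps to the left of the $j$-th interval only push it further right and thus only improve the bound.
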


\begin{proof}

For each $k$ let $I_{k,j}=(u_{k,j},v_{k,j})$ 
with $u_{k,1}<\ldots<u_{k,2^k}$
be the open intervals of length $t_k$ we removed from the middle of the closed intervals of $E_k$. 
Thus $v_{k,j}-u_{k,j}=t_k$ for each $k$ and $j$ and we have
$E=[0,1]\setminus\cup_{k,j} I_{k,j}$.

Let $m=\lambda(E)$. Note that the first inequality of \eqref{e:convforCor} gives that $m>0$.
By definition, the closed intervals of $E_k$ have length 
$\lambda(E_k)/2^k\ge \lambda(E)/2^k=m/2^k$.
This implies that $v_{k,j}\ge (j-1/2)\cdot m/2^k$ for each $k,j$.

Therefore
\begin{equation*}
    \sum_{k=0}^\infty \sum_{j=1}^{2^k} \frac{v_{k,j}-u_{k,j}}{v_{k,j}} \le
    \sum_{k=0}^\infty \sum_{j=1}^{2^k} \frac{t_k}{(j-1/2)\cdot m/2^k} =
    \frac1m\cdot\sum_{k=0}^\infty 2^k\cdot t_k\cdot\sum_{j=1}^{2^k} \frac{1}{(j-1/2)}.
\end{equation*}
Since the inner sum of the right-handside is at most 
$C\log(2^k)=Ck\log2$ for some constant $C$,
the second inequality of \eqref{e:convforCor} implies
that Lemma~\ref{l:or} can be applied to complete the proof.
\end{proof}

\begin{definition}
We call a symmetric Cantor set $E\subset [0,1]$ a \emph{middle-$a$ fat Cantor set} if it is a symmetric Cantor set (see Definition~\ref{d:dsCantor}) with parameters $t_k=a^{k+1}$ for some $0<a<1/3$. Note that we necessarily need $a<1/3$ to preserve positive measure of our resultant set.
\end{definition}

\begin{corollary}\label{cor:mafatCantor}
All middle-$a$ fat Cantor sets contain an infinite 
geometric progression with every quotient $q\in(0,1)$.
%
\end{corollary}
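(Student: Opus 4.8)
The plan is to deduce Corollary~\ref{cor:mafatCantor} directly from Corollary~\ref{cor:dyadicsymmetric_new} by checking that the parameter sequence $t_k=a^{k+1}$ with $0<a<1/3$ satisfies both conditions in \eqref{e:convforCor}. For the first condition, I would simply compute the geometric series $\sum_{k=0}^\infty 2^k t_k = \sum_{k=0}^\infty 2^k a^{k+1} = a\sum_{k=0}^\infty (2a)^k$; since $a<1/3$ we have $2a<2/3<1$, so this sum converges to $a/(1-2a)$, and I would then verify that $a/(1-2a)<1$, which is equivalent to $a<1-2a$, i.e. $3a<1$, i.e. $a<1/3$ — exactly our hypothesis. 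So the first inequality of \eqref{e:convforCor} holds (with equality excluded).

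For the second condition I would estimate $\sum_{k=0}^\infty 2^k\cdot t_k\cdot k = a\sum_{k=0}^\infty k(2a)^k$. Since $2a<1$, the series $\sum_{k=0}^\infty k x^k$ converges for $x=2a$ (it equals $x/(1-x)^2$), so the whole sum is finite. That verifies the second inequality of \eqref{e:convforCor}.

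Having checked both conditions, I would invoke Corollary~\ref{cor:dyadicsymmetric_new} to conclude that every middle-$a$ fat Cantor set contains an infinite geometric progression with every quotient $q\in(0,1)$. There is essentially no obstacle here: the only thing to be mildly careful about is that Corollary~\ref{cor:dyadicsymmetric_new} requires the \emph{strict} inequality $\sum 2^k t_k<1$ rather than the non-strict one from Definition~\ref{d:dsCantor}, but as noted above the strict inequality is exactly what $a<1/3$ gives us. So the entire proof is a two-line verification that a geometric and an arithmetico-geometric series converge and that the first of them is less than $1$.
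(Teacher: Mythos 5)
Your proof is correct and is exactly the argument the paper gives (the paper simply states that \eqref{e:convforCor} "is easy to see" for $t_k=a^{k+1}$ with $a\in(0,1/3)$ and invokes Corollary~\ref{cor:dyadicsymmetric_new}); your computation of the geometric series $a/(1-2a)<1$ and the convergent arithmetico-geometric series fills in those routine details correctly.
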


\begin{proof}
It is easy to see that \eqref{e:convforCor} holds for
$t_k=a^{k+1}$ if $a\in(0,1/3)$, so Corollary~\ref{cor:dyadicsymmetric_new} can be applied.
\end{proof}

\section*{Acknowledgements}
The authors would like to thank the
Budapest Semesters in Mathematics program for providing the framework under
which this research was conducted.

\printbibliography
\end{document}